\newcommand{\C}{\mathbb{C}}
\newcommand{\R}{\mathbb{R}}
\newtheorem{theorem}{Theorem}[section]
\newtheorem{lemma}[theorem]{Lemma}
\newtheorem{proposition}[theorem]{Proposition}
\newtheorem{corollary}[theorem]{Corollary}
\theoremstyle{definition}
\newtheorem{remark}[theorem]{Remark}
\newtheorem{definition}[theorem]{Definition}
\title[LEGENDRIAN REALIZATION IN CONVEX LEFSCHETZ FIBRATIONS AND CONVEX STABILIZATIONS]{LEGENDRIAN REALIZATION IN CONVEX LEFSCHETZ FIBRATIONS\\AND CONVEX STABILIZATIONS}
\author{Selman Akbulut}
\address{Department of Mathematics, Michigan State University, Lansing MI, USA}
\email{akbulut@math.msu.edu}
\thanks{The first author is partially supported by NSF FRG grant DMS- 0905917}
\author{M. Firat Arikan}
\address{Department of Mathematics, Middle East Technical University, Ankara, TURKEY}
\email{farikan@metu.edu.tr}
\thanks{The second author is partially supported by NSF FRG grant DMS-1065910, and also by TUBITAK grant }
\subjclass[2000]{57R65, 58A05, 58D27}
\keywords{Liouville, convex symplectic, contact, Lefschetz fibration, open book, stabilization}
\date{\today}
\begin{document}

\begin{abstract}

In this paper, we study compact convex Lefschetz fibrations on compact convex symplectic manifolds (i.e., Liouville domains) of dimension $2n+2$ which are introduced by Seidel and later also studied by McLean. By a result of Akbulut-Arikan, the open book on $\partial W$, which we call \emph{convex open book}, induced by a compact convex Lefschetz fibration on $W$ carries the contact structure induced by the convex symplectic structure (i.e., Liouville structure) on $W$. Here we show that, up to a Liouville homotopy and a deformation of compact convex Lefschetz fibrations on $W$, any simply connected embedded Lagrangian submanifold of a page in a convex open book on $\partial W$ can be assumed to be Legendrian in $\partial W$ with the induced contact structure. This can be thought as the extension of Giroux's Legendrian realization (which holds for contact open books) for the case of convex open books. Moreover, a result of Akbulut-Arikan implies that there is a one-to-one correspondence between convex stabilizations of a convex open book and convex stabilizations of the corresponding compact convex Lefschetz fibration. We also show that the convex stabilization of a compact convex Lefschetz fibration on $W$ yields a compact convex Lefschetz fibration on a Liouville domain $W'$ which is exact symplectomorphic to a \emph{positive expansion} of $W$. In particular, with the induced structures $\partial W$ and $\partial W'$ are contactomorphic.
\end{abstract}

\maketitle


\section{Introduction}

Let $M$ be an oriented smooth $(2n+1)$-manifold. A \emph{contact structure} on $M$ is a global $2n$-plane field distribution $\xi$ which is totally non-integrable. Non-integrability condition is equivalent to the fact that locally $\xi$ can be given as the kernel of a 1-form $\alpha$ such that $\alpha \wedge (d\alpha)^n > 0$. If $\alpha$ is globally defined, then it is called a \emph{contact form} for $\xi$. An \emph{open book (decomposition)} $(B,\Theta)$ on $M$ is given by a codimension $2$ submanifold $B\hookrightarrow M$ with trivial normal bundle, and a fiber bundle $\Theta : M-B \rightarrow S^1$. The neighborhood of $B$ should have a trivialization $B \times
D^2$, where the angle coordinate on the disk agrees with the map $\Theta$. The manifold $B$ is called the \emph{binding}, and for any $t_0 \in S^1$ a fiber $X = \Theta^{-1}(t_0)$ is called a \emph{page} of the open book. A contact structure $\xi$ on $M$ is said to be \emph{supported by} (or \emph{carried by}) an open book $(B, \Theta)$ of $M$ if there exists a contact form $\alpha$ for $\xi$ such that
\begin{itemize}
\item[(i)] $(B, \alpha |_{TB})$ is a contact manifold.
\item[(ii)] For every $t\in S^1$, the page $X=\Theta^{-1}(t)$ is a symplectic manifold with symplectic form $d\alpha$.
\item[(iii)] If $\bar{X}$ denotes the closure of a page $X$ in $M$, then the orientation of $B$ induced by its contact form $\alpha |_{TB}$ coincides with its orientation as the boundary of $(\bar{X}, d\alpha)$.
\end{itemize}
An open book $(B, \Theta)$ on $M$ is called a \emph{contact open book} if it carries a contact structure on $M$. 

Open books can be defined also abstractly in the following way: If $(B,\Theta)$ is an open book for $M$, with pages $X$, and if $N(B)$ denotes the closed tubular neighborhood of $B$ in $M$, then $M -\textrm{int}N(B)$ is a fiber bundle over $S^1$ with fibers $\bar{X}$, hence it is diffeomorphic to the
\emph{(generalized) mapping torus} $\bar{X}(h):= [0,1] \times \bar{X} / \sim$, where $\sim$ is the identification $(0, p) \sim (1,h(p))$ for some diffeomorphism $h: \bar{X} \to \bar{X} $ which is identity near $\partial \bar{X} \approx B$. Observe that $\partial \bar{X}(\phi)\approx B \times S^1$, and so there is a unique way to glue $N(B)\approx B \times D^2$  and get $M$ back. Therefore, the pair $(\bar{X}, h)$ (abstractly) determines the open book $(B,\Theta)$, and it is called an \emph{abstract open book} of $M$. The map $h$ is called the \emph{monodromy} of the (abstract) open book.\\

Our main theorem is Theorem \ref{thm:Legendrian_Realiz_in_convex_Lef_fib} stated below. It can be considered as the extension of the following Giroux's result (given for contact open books) when the given contact open book arises as the boundary open book of a compact convex Lefschetz fibration over the $2$-disk $D^2$. 

\begin{theorem} [See, for instance, Lemma 4.2 in \cite{V}] \label{thm:Legendrian_Realiz_in_Contact_open_book}
Let $(X,h)$ be a contact open book for the contact manifold $(M,\xi=\emph{Ker}(\alpha))$ of dimension $2n+1$ for $n\geq2$. Let $L$ be any Lagrangian $n$-sphere embedded in a page $(X,d\alpha)$. Then there exist an isotopy $\xi_t$, $t \in [0,1]$, of contact structures with $\xi_0=\xi$ and a contact open book carrying $\xi_1$ with symplectomorphic page and isotopic monodromy such that $L$ is Legendrian in $(M,\xi_1)$.
\end{theorem}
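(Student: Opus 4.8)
The plan is to reduce the problem to the local model of a \emph{contactization} near the page containing $L$, realize $L$ as Legendrian there by pushing it to the graph of a suitable function, and then transport this back to an isotopy of contact structures on all of $M$ via a Gray-type argument, while checking that the required open-book data is preserved.

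First I would fix the page $X=\Theta^{-1}(t_0)$ containing $L$ and observe that, since $L\cong S^n$ is closed and may be pushed inward along the backward Liouville flow of $(X,\lambda:=\alpha|_{TX})$, we may assume $L$ lies in the interior of $X$, disjoint from the binding $B$. In the region away from the binding the open book is a trivial fibration, so by the standard Thurston--Winkelnkemper/Giroux construction of a contact form compatible with $(X,h)$ one may arrange (after a deformation through supporting forms, which supplies the first piece of the isotopy $\xi_t$) that on a neighborhood $N\cong(-\epsilon,\epsilon)_\theta\times U$ of the interior portion of $X$ containing $L$ the contact form is the contactization $\alpha=d\theta+\lambda$, with $L\subset\{0\}\times U$ and $d\lambda=d\alpha|_{TX}$ the symplectic form on the page.

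Next comes the Legendrian realization in this model. Since $L$ is Lagrangian, $d\alpha|_{TL}=0$, so $\lambda|_{TL}$ is a closed $1$-form on $L$; here the hypothesis $n\geq2$ enters decisively, for $H^1(S^n;\R)=0$ forces $\lambda|_{TL}=-df$ for some $f\in C^\infty(L)$ (this is exactly where the analogous statement fails for $n=1$). After rescaling $\lambda$ by a small constant if necessary, so that the graph stays inside $N$ (this changes the page only up to symplectomorphism, via the Liouville flow), the submanifold $\Gamma=\{(f(x),x):x\in L\}$ satisfies $\alpha|_{T\Gamma}=(df+\lambda)|_{TL}=0$ and is therefore Legendrian for $\xi=\mathrm{Ker}(\alpha)$. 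I would then choose an ambient isotopy $\psi_s$ of $M$, compactly supported in $N$, with $\psi_0=\mathrm{id}$ and $\psi_1(L)=\Gamma$, and set $\xi_s:=\psi_s^{*}\xi$. Then $\xi_0=\xi$, the family $\{\xi_s\}$ is an isotopy of contact structures, and $T_pL=(d\psi_1)^{-1}(T_{\psi_1(p)}\Gamma)\subset(\psi_1^{*}\xi)|_p$ shows that $L$ is Legendrian for $\xi_1$.

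Finally I would carry out the open-book bookkeeping. Because $\psi_1$ is supported in $N$, which is disjoint from $B$, the contact structure $\xi_1=\psi_1^{*}\xi$ is carried by the open book $(\psi_1^{-1}(B),\Theta\circ\psi_1)=(B,\Theta\circ\psi_1)$, whose fibration is isotopic to $\Theta$ through $\Theta\circ\psi_s$; its page is $\psi_1^{-1}(X)$ with symplectic form $\psi_1^{*}(d\alpha)$, hence symplectomorphic to $(X,d\alpha)$, and its monodromy is the $\psi_1$-conjugate of $h$, hence isotopic to $h$. Concatenating with the preliminary deformation of the first step yields the desired isotopy $\xi_t$. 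I expect the main obstacle to be that first step: producing an honest contactization normal form in a neighborhood of $L$ while remaining inside the class of contact forms supporting $(X,h)$, so that the deformation genuinely is an isotopy of contact structures carried by open books with symplectomorphic page and isotopic monodromy. The local realization via the exactness of $\lambda|_{TL}$ and the conjugation bookkeeping are then essentially formal.
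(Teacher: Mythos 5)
Your argument is essentially sound, but it takes the ``move the submanifold'' route, whereas the proof the paper leans on (Lemma 4.2 of \cite{V}, whose mechanism is reprised in the proof of Theorem \ref{thm:Legendrian_Realiz_in_convex_Lef_fib} and Proposition \ref{prop:Extending_the_homotopy}) takes the dual ``move the structure'' route: there one keeps $L$, the fibration and the symplectic form $d\beta$ on the page completely fixed, writes $\beta-\alpha_{can}=dg$ near $L$ using a Weinstein neighborhood and $H^1_{dR}(L)=0$, and deforms only the Liouville primitive via $\alpha_t=\beta-t\,d(\mu g)$; this family of page primitives is then fed through the supporting-form construction (in the paper, extended fiberwise over the total space using Seidel's trivialization lemma near a regular fiber) and Gray stability produces the isotopy $\xi_t$. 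Your version fixes the contact form, realizes $L$ as the Legendrian graph $\Gamma$ of $-f$ in a contactization neighborhood, and conjugates by an ambient isotopy $\psi_s$. What your route buys is a concrete geometric picture of the Legendrian; what it costs is exactly the step you flag---the normal form $d\theta+\lambda$ must be produced by a preliminary deformation through supporting forms, which in the paper's setting comes for free from the local trivialization $\Psi^*\lambda=\alpha_0+f_1dx_1+f_2dx_2+dR$ near a regular fiber (and your rescaling worry is more cleanly handled by taking $K\,d\theta+\lambda$ with $K$ large, so the graph of $-f/K$ stays in the neighborhood without altering the page symplectic form).

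There is one genuine deficiency in your endgame as written. With an arbitrary compactly supported isotopy $\psi_s$, the open book you exhibit carrying $\xi_1$ is $(B,\Theta\circ\psi_1)$, whose pages are $\psi_1^{-1}(\Theta^{-1}(t))$; since $\Gamma$ is the graph of a generically non-constant function, it lies in no page of $\Theta$, so $L=\psi_1^{-1}(\Gamma)$ lies on no page of the open book that carries $\xi_1$. The literal statement only demands that $L$ be Legendrian in $(M,\xi_1)$, but every intended application (stabilization along $L$, Dehn twists, and the paper's Theorem \ref{thm:Legendrian_Realiz_in_convex_Lef_fib}, where the same $\pi$ and the same page $X_0\ni L$ persist) requires $L$ to be Legendrian \emph{and} on a page. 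The fix is to choose $\psi_s$ to be a shear purely in the $\theta$-direction, say $\psi_s(\theta,x)=(\theta+s\,\rho(\theta)\tilde f(x),x)$ with $\rho(0)=1$ and $\tilde f$ a cut-off extension of $f$; then $\psi_1^*\alpha=\alpha+d(\rho\tilde f)$ differs from $\alpha$ by an exact term, so $d(\psi_1^*\alpha)=d\alpha$ and the \emph{original} open book $(B,\Theta)$ still carries $\xi_1$ with the same symplectic pages and the same monodromy, while $\psi_1^*\alpha|_{TL}=\lambda|_{TL}+df=0$ shows $L\subset X$ is $\xi_1$-Legendrian on its original page. Note that restricted to the page this shear changes the primitive by $\beta\mapsto\beta+d\tilde f$, i.e.\ your conjugation then literally coincides with the paper's primitive deformation---the two proofs are Moser-dual to one another.
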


A \emph{compact convex symplectic manifold} (or a \emph{Liouville domain}) is a pair $(W,\Lambda)$ where $W^{2n+2}$ is a compact manifold with boundary, together with a \emph{convex symplectic structure} (or a \emph{Liouville structure}) which is a one-form $\Lambda$ on $W$ such that $\Omega=d\Lambda$ is symplectic and the $\Omega$-dual vector field $Z$ of $\Lambda$ defined by $\iota_Z \Omega=\Lambda$ (or, equivalently, $\mathcal{L}_{Z}\Omega=\Omega$), where $\iota$ denotes the interior product and $\mathcal{L}$ denotes the Lie derivative, should point strictly outwards along $\partial W$. Since $\Omega$ and $Z$ (resp. $\Omega$ and $\Lambda$) together uniquely determine $\Lambda$ (resp. $Z$), one can replace the notation with the triple $(W,\Omega,Z)$ (resp. $(W,\Omega,\Lambda)$). The one-form $\Lambda_{\partial}:=\Lambda|_{\partial W}$ is contact (i.e., $\Lambda_{\partial} \wedge (d\Lambda_{\partial})^{n}>0$), and the contact manifold $(\partial W,\textrm{Ker} (\Lambda_{\partial}))$ is called the \emph{convex boundary} of $(W,\Lambda)$. The \emph{completion} of $(W,\Omega,Z)$ is obtained from $W$ by gluing the positive part $\partial W \times [0,\infty)$ of the symplectization $(\partial W \times \mathbb{R}, d(e^t \Lambda_{\partial}))$ of its convex boundary. A \emph{compact convex symplectic deformation} (or a \emph{homotopy of Liouville domains}) is a smooth $1$-parameter family $(W,\Omega_t,Z_t)$, $t \in [0,1]$, of compact convex symplectic manifolds.

\begin{definition} \label{def:Lefschetz_Fibration}
A smooth map $\pi: W^{2n+2} \rightarrow \Sigma^2$ is called a (\emph{smooth}) \textit{Lefschetz fibration} if $\pi$ is onto and has finitely many critical points such that there exist complex coordinates $w=(z_1, z_2, ...,z_{n+1})$ around each critical point $p \in W$ and a complex coordinate $z$ around the corresponding critical value $\pi(p)\in\Sigma$ which are consistent with the given orientation of $W$ and $\Sigma$, and $\pi(p)$ has the local representation:
$$z=\pi(w)=z_0+z_1^2 + z_2^2 +...+z_{n+1}^2.$$
\end{definition}

By definition, $\pi$ has finitely many critical values. Let $a$ be a regular value and $X=\pi^{-1}(a)$ a regular fiber. Being an honest fibration away from critical values, all regular fibers are diffeomorphic to $X$. When $\Sigma$ is a two-disk $D^2 \subset \C$, then $\pi$ determines (induces) an open book on the boundary $\partial W$. The \emph{monodromy} of $\pi$ is defined to be the monodromy of the induced open book $(X,h)$ on $\partial W$. We will denote such Lefschetz fibration over $D^2$ by the quadraple $(\pi,W,X,h)$.\\

In \cite{S1,S2}, Seidel introduced ``exact Lefschetz fibrations'' which are further studied by McLean \cite{Mc} under the name ``compact convex Lefschetz fibrations''. By definition the fiber and the base of such fibration $\pi$
are compact connected manifolds with boundary, some triviality condition is satisfied near the horizontal boundary component, and the total space $W$ is equipped with an exact $2$-form $\omega=d\lambda$ whose restriction on a regular fiber $X$ defines a convex symplectic (Liouville) structure, and $\pi$ has finitely many critical points of complex quadratic type (see Section \ref{sec:Com_con_Lefschetz_fib} for precise statements). By adding a pullback of a positive volume form on the base surface $S$, $\omega$ gives a symplectic form $\Omega$ on $W$ \cite{S2}. Indeed, one can construct a convex symplectic structure $\Lambda$ on $W$ with $d\Lambda=\Omega$ which restricts to the convex symplectic structure on each regular fiber of $\pi$ \cite{Mc}. A convex symplectic structure $(\Omega,\Lambda)$ constructed in this way is said to be \emph{compatible with} the compact convex Lefschetz fibration $\pi$ (see \cite{AA}). For our purpose we will only consider compact convex Lefschetz fibrations over $D^2$. Also, in our notation, we would like to specify the regular fiber and the monodromy of $\pi:W \to D^2$ as well as a convex symplectic structure $(\Omega,\Lambda)$ on $W$ compatible with $\pi$. Therefore, we let $(\pi,W,\Omega,\Lambda,X,h)$ denote the compatible compact convex Lefschetz fibration on the Liouville domain $(W,\Omega,\Lambda)$ over $D^2$ with the following properties:

\begin{itemize}
\item[(i)] Underlying smooth Lefschetz fibration is $(\pi,W,X,h)$. 
\item[(ii)] The restriction $\Lambda |_X$ defines a convex symplectic structure on every regular fiber $X$.
\item[(iii)] The monodromy $h$ of $\pi$ is a symplectomorphism of $(X,\Omega|_X)$
\end{itemize}

We'll call an open book $(X,h)$ on $\partial W$ a \emph{convex open book} if it is induced by a compatible compact convex Lefschetz fibration $(\pi,W,\Omega,\Lambda,X,h)$. By a \emph{compatible compact convex Lefschetz deformation}, we mean a smooth $1$-parameter family $(\pi_t,W,\Omega_t,\Lambda_t,X_t,h_t)$, for $t \in [0,1]$, of compatible compact convex Lefschetz fibrations on $W$. We are now ready to state our main theorem:

\begin{theorem} \label{thm:Legendrian_Realiz_in_convex_Lef_fib}
Let $(\pi,W,\Omega,\Lambda,X,h)$ be any compatible compact convex Lefschetz fibration. Let $L \subset X_0 \setminus \partial X_0$ be any simply connected Lagrangian submanifold embedded in a page $X_0$ of the boundary convex open book $(X,h)$. Then there exists another compatible compact convex Lefschetz fibration $(\pi, W,\Omega',\Lambda',X,h)$ on $W$ such that  
\begin{itemize}
\item[(i)] there is a compact convex symplectic deformation $(\Omega_t,\Lambda_t)$,  $t \in [0,1]$, on $W$ connecting $(\Omega_0,\Lambda_0)=(\Omega,\Lambda)$ to $(\Omega_1,\Lambda_1)=(\Omega',\Lambda')$,
\item [(ii)] the completions of $(W,\Omega,\Lambda)$ and $(W,\Omega',\Lambda')$ are exact symplectomorphic,
\item [(iii)] $(\pi, W,\Omega_t,\Lambda_t,X,h)$,  $t \in [0,1]$, is a compatible compact convex Lefschetz deformation, 
\item [(iv)] $\xi_t=\emph{Ker}(\Lambda_t|_{\partial W})$ is an isotopy of contact structures on $\partial W$ from  $\xi=\emph{Ker}(\Lambda|_{\partial W})$ to $\xi'=\emph{Ker}(\Lambda'|_{\partial W})$, and
\item [(v)] the submanifold $L$ is Legendrian in $(\partial W,\xi')$.
\end{itemize}
\end{theorem}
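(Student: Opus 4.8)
The plan is to leverage the analogous Giroux-type result for genuine contact open books (Theorem~\ref{thm:Legendrian_Realiz_in_Contact_open_book}) and to transport it, via the compatibility dictionary between convex open books and compact convex Lefschetz fibrations, to the convex Lefschetz setting. The key point is that the convex open book $(X,h)$ induced on $\partial W$ is a \emph{bona fide} contact open book carrying $\xi=\mathrm{Ker}(\Lambda|_{\partial W})$ (this is the Akbulut--Arikan result quoted in the introduction). Hence, by the contact Legendrian realization theorem, there is an isotopy $\xi_t$ of contact structures on $\partial W$, with $\xi_0=\xi$, realized through contact open books with symplectomorphic page and isotopic monodromy, after which $L$ becomes Legendrian in $(\partial W,\xi_1)$. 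The heart of the argument is then to \emph{lift} this purely boundary-level isotopy to a deformation of the ambient Liouville structure and of the compact convex Lefschetz fibration on all of $W$, so that conclusions (i)--(iv) hold and the boundary conclusion (v) is exactly the output of the contact realization.

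First I would set up the correspondence precisely: given $(\pi,W,\Omega,\Lambda,X,h)$, the restriction $\Lambda|_{\partial W}$ is the contact form whose associated open book is $(X,h)$, and a deformation of $\Lambda$ that is supported in a collar $\partial W \times [0,1]$ of the boundary will change the induced contact form but not the Lefschetz fibration data in the interior. The strategy is to realize the contact isotopy $\xi_t$ of Theorem~\ref{thm:Legendrian_Realiz_in_Contact_open_book} by a path of contact forms $\alpha_t$ on $\partial W$ with $\alpha_0=\Lambda|_{\partial W}$, and then to extend each $\alpha_t$ inward using the Liouville flow / completion: on the collar one writes $\Lambda_t = e^{s}\alpha_t$ in the coordinate $s$ of the symplectization direction and interpolates back to the unchanged $\Lambda$ in the interior by a cutoff. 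The construction in the proof of the contact realization theorem keeps the page symplectomorphic and the monodromy isotopic, which is exactly what is needed to guarantee that the deformed one-form $\Lambda_t$ remains compatible with a compact convex Lefschetz fibration having the \emph{same} fiber $X$ and monodromy $h$; this gives conclusion (iii). I would then check directly from the construction that $\Omega_t=d\Lambda_t$ stays symplectic, that the Liouville vector field $Z_t$ still points outward along $\partial W$, and that $\Lambda_t|_X$ remains a Liouville structure on each regular fiber, yielding (i) and the compatibility conditions (ii)--(iii) of the definition of $(\pi,W,\Omega_t,\Lambda_t,X,h)$.

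Conclusion (iv) follows because the induced boundary contact structures $\mathrm{Ker}(\Lambda_t|_{\partial W})=\mathrm{Ker}(\alpha_t)=\xi_t$ are, by construction, precisely the contact isotopy produced by the realization theorem, and (v) is its terminal conclusion that $L\subset X_0$ is Legendrian in $(\partial W,\xi_1=\xi')$. Conclusion (ii) — that the completions of $(W,\Omega,\Lambda)$ and $(W,\Omega',\Lambda')$ are exact symplectomorphic — I would obtain from a Gray/Moser-type stability argument: a compact convex symplectic deformation $(\Omega_t,\Lambda_t)$ with fixed underlying manifold induces an exact symplectomorphism of the completions, since the Liouville flows can be used to identify the completed collars, and the deformation is supported away from a fixed compact core. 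This is the standard fact that homotopic Liouville structures on a fixed $W$ have exact-symplectomorphic completions, and I would invoke it once (i) is established.

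The main obstacle I anticipate is not the boundary contact geometry, which is handed to us by Theorem~\ref{thm:Legendrian_Realiz_in_Contact_open_book}, but rather verifying that the boundary deformation $\alpha_t$ can be extended \emph{globally} over $W$ as a Liouville form $\Lambda_t$ that remains \emph{compatible with the Lefschetz fibration} for every $t$ — in particular keeping each fiber restriction $\Lambda_t|_X$ convex and preserving the complex-quadratic model near the critical points. The delicate part is coordinating the collar interpolation with the fibration structure so that neither the symplectic nondegeneracy of $\Omega_t$ nor the fiberwise convexity is lost during the homotopy; I expect this to require a careful cutoff adapted to the product structure $\partial W \times [0,1]$ supplied by the convex open book, together with the observation that the changes prescribed by the contact realization theorem are localized near $L$ (hence in the interior of a single page) and thus interact controllably with the global fibration. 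Once this localization and extension are carried out, the five conclusions assemble directly from the construction.
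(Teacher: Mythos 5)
There is a genuine gap, and it sits exactly where you flag it: the passage from the boundary contact isotopy to a global deformation of the Liouville structure that stays \emph{compatible with the fibration}. Your proposed mechanism --- extend each contact form $\alpha_t$ inward as $e^s\alpha_t$ on a collar of $\partial W$ and cut off into the interior --- fails on two counts. First, the deformation is not $C^0$-small: on the page it is $\beta - t\,d(\mu g)$, where $g$ is determined by the (possibly large) difference $\beta-\alpha_{can}$. Writing $\Lambda_t=e^s(\alpha - t\rho(s)\,dh)$ in collar coordinates, one computes $(d\Lambda_t)^{n+1}=(n+1)\,e^{(n+1)s}\,ds\wedge\bigl(\alpha - t(\rho+\rho')\,dh\bigr)\wedge(d\alpha)^n$, and the cutoff derivative $\rho'$ is uncontrolled on a fixed collar, so nondegeneracy of $\Omega_t$ is not automatic; some additional mechanism is required. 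Second, and more fundamentally, conclusion (iii) is a condition on the total space --- fiberwise Liouville restrictions on \emph{every} regular fiber, the triviality condition near $\partial_h W$ in Definition \ref{def:Exact_Symplectic_Fibration}, and $\Omega_t=\omega_t+\pi^*(\omega_{D^2})$ taming $J$ --- not a condition on the boundary open book. The Liouville collar of $\partial W$ is not fiber-preserving (the flow of $Z$ has a nontrivial horizontal component relative to $\pi$), so pushing the boundary deformation inward along it smears the change across the fibers $\pi^{-1}(re^{i\theta})$ in a way that cannot be checked against these conditions; your claim that ``symplectomorphic page and isotopic monodromy'' guarantees compatibility of $\Lambda_t$ with a fibration having the same $(\pi,X,h)$ is a non sequitur, since Theorem \ref{thm:Legendrian_Realiz_in_Contact_open_book} is a purely boundary-level statement.

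The paper's proof runs inside-out rather than outside-in, and the two ingredients that close your gap are visible there. It does not quote Theorem \ref{thm:Legendrian_Realiz_in_Contact_open_book} as a black box; it reconstructs the page homotopy $\alpha_t=\beta-t\,d(\mu g)$ directly (Lagrangian neighborhood theorem plus $H^1_{dR}(L)=0$) and then extends it \emph{through the fibration}: Proposition \ref{prop:Extending_the_homotopy} uses Seidel's fibered local trivialization (Lemma 1.1 of \cite{S2}) over a sector $A_{\theta_0}(\epsilon,\delta)\subset D^2$ to spread the homotopy to nearby fibers, tapered by base cutoffs $f(r),g(\theta)$, so the fiberwise Liouville condition and the triviality near $\partial_h W$ hold by construction (note $\alpha_t=\beta$ near $\partial X_0$, so the horizontal boundary is untouched). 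The resulting family $\lambda'_t$ need not be globally Liouville --- this is where your nondegeneracy problem reappears --- and the repair is McLean's Theorem \ref{thm:Mc}: add $K'\pi^*(\lambda_{D^2})$ for a single sufficiently large constant $K'$ (uniform in $t$), concatenating the straight-line homotopy in the constant with the fiberwise deformation. The boundary contact isotopy (iv) is then an \emph{output} of the ambient construction via Gray stability, (iii) on the open book level is Theorem \ref{thm:Liouville_Openbooks_Support}, (ii) is Lemma \ref{lem:Liouville_homotopy}, and (v) follows because $\Lambda'|_{X_0}=\alpha_{can}$ vanishes on $L$ while the Reeb direction of $\Lambda'|_{\partial W}$ is transverse to the page. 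To salvage your outline, replace the contact-collar extension by this fibered extension together with the large-constant trick; without them, conclusions (i) and (iii) remain unproven.
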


\begin{remark}
 The only topological property of $S^n$ used in the proof of Theorem \ref{thm:Legendrian_Realiz_in_Contact_open_book} is the fact that $H_{dR}^1(S^n)=0$ (as $n\geq2$). This is why we state our main theorem for simply connected Lagrangian submanifolds. We remark that the same generalization also applies to Theorem \ref{thm:Legendrian_Realiz_in_Contact_open_book}. We also note that according to the terminology of \cite{S3} the Liouville domains $(W,\Omega_t,\Lambda_t)$ above are mutually Liouville isomorphic. 
\end{remark}

The proof will be given in Section \ref{sec:the_proof}. As an application of this result one can easily conclude

\begin{corollary} \label{cor:any_stab_can_be_made_convex}
After a compatible compact convex Lefschetz deformation $(\pi,W,\Omega_t,\Lambda_t,X,h)$, any positive stabilization of a compatible compact convex Lefschetz fibration $(\pi,W,\Omega_0,\Lambda_0,X,h)$ can be realized as a convex stabilization of $(\pi,W,\Omega_1,\Lambda_1,X,h)$. \qed 
\end{corollary}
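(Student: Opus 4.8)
The plan is to recognize the stabilization as being governed by a single Lagrangian sphere and then to feed that sphere into Theorem \ref{thm:Legendrian_Realiz_in_convex_Lef_fib}. A positive stabilization of the Lefschetz fibration underlying $(\pi,W,\Omega_0,\Lambda_0,X,h)$ is prescribed by an embedded Lagrangian sphere $L\cong S^{n}$ serving as the vanishing cycle of the new critical point: one composes the monodromy with the right-handed Dehn twist $\tau_{L}$ and attaches the corresponding Lefschetz handle to $W$. After an isotopy we may assume that this sphere lies in the interior of a page, i.e. $L\subset X_{0}\setminus\partial X_{0}$ for a page $X_{0}$ of the boundary convex open book. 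Since $n\geq2$ the sphere $L$ is simply connected, so it is exactly the type of submanifold to which the main theorem applies.

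First I would apply Theorem \ref{thm:Legendrian_Realiz_in_convex_Lef_fib} to this $L$. This yields a compatible compact convex Lefschetz deformation $(\pi,W,\Omega_{t},\Lambda_{t},X,h)$, $t\in[0,1]$, which fixes the underlying smooth fibration $(\pi,W,X,h)$ and the sphere $L$, begins at $(\Omega_{0},\Lambda_{0})$, and terminates at a compatible convex structure $(\Omega_{1},\Lambda_{1})$ for which $L$ is Legendrian in $(\partial W,\xi_{1}=\mathrm{Ker}(\Lambda_{1}|_{\partial W}))$. Because the deformation changes only the convex symplectic data and not the smooth fibration, the positive stabilization determined by $L$ is literally the same smooth operation before and after the deformation; what has changed is only that its attaching sphere has been upgraded from Lagrangian-in-a-page to Legendrian-in-the-boundary.

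It then remains to observe that a positive stabilization whose vanishing cycle is Legendrian in the contact boundary is a convex stabilization. Attaching the Lefschetz handle along the Legendrian sphere $L$ is precisely a Weinstein (convex) handle attachment, so the Liouville structure $(\Omega_{1},\Lambda_{1})$ extends over the stabilized total space and restricts to the expected convex symplectic structure on the new regular fiber; this is the definition of a convex stabilization of $(\pi,W,\Omega_{1},\Lambda_{1},X,h)$. The one-to-one correspondence between convex stabilizations of a convex open book and of the corresponding compact convex Lefschetz fibration (the result of Akbulut--Arikan recalled in the introduction) then matches the two descriptions of the operation on the open-book and the Lefschetz-fibration sides.

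The only step that requires genuine care, and hence the main obstacle, is this last identification: one must check that the smooth type of the convex stabilization performed along the now-Legendrian $L$ coincides with that of the originally prescribed positive stabilization. Concretely this amounts to matching the attaching framing of the Weinstein handle together with the induced monodromy factor $\tau_{L}$ against the abstract stabilization data, which follows from the standard local models for Lefschetz and Weinstein handles and from the compatibility conditions (ii)--(iii) in the definition of a compatible compact convex Lefschetz fibration. Everything else is a direct transcription of Theorem \ref{thm:Legendrian_Realiz_in_convex_Lef_fib}, which is precisely why the corollary follows easily.
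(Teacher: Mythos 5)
There is a genuine gap, and it lies in your very first step: you have misidentified the data that prescribes a positive stabilization. By Definition \ref{def:Stabilization_Lefschetz_Fibration}, a positive stabilization of $(\pi,W,X,h)$ is prescribed not by a closed Lagrangian sphere in the interior of a page, but by a \emph{properly embedded} Lagrangian $n$-ball $L\cong D^n$ in a page with Legendrian boundary $\partial L\subset \partial X$ in the binding. The sphere you propose to feed into Theorem \ref{thm:Legendrian_Realiz_in_convex_Lef_fib}, namely the vanishing cycle $S=D^n\times\{0\}\cup_{\partial L}L$, does not exist in any page of the \emph{given} fibration: it is created by the stabilization itself, as the union of $L$ with the core of the Weinstein handle $H$ attached to the page, and it lives in the new page $X'=X\cup H$. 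So the assertion that ``after an isotopy we may assume that this sphere lies in the interior of a page $X_0$'' is vacuous here, and the hypothesis of the main theorem cannot be arranged for $S$. Moreover, even if some sphere could be made Legendrian, that is not what a convex stabilization demands: by definition (following \cite{AA}), a convex stabilization is a positive stabilization whose attaching Lagrangian \emph{disk} is Legendrian in $(\partial W,\mathrm{Ker}(\Lambda|_{\partial W}))$. The object to which Legendrian realization must be applied is $L$ itself, not the vanishing cycle it generates.

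The correct (and intended) argument is shorter than yours: given the positive stabilization of $(\pi,W,\Omega_0,\Lambda_0,X,h)$ along the Lagrangian ball $L$, apply Theorem \ref{thm:Legendrian_Realiz_in_convex_Lef_fib} to $L$ directly --- a disk is simply connected, and by the remark following the theorem the only property of the submanifold used in the proof is $H^1_{dR}(L)=0$, the Weinstein neighborhood argument being carried out respecting the Legendrian boundary condition $\partial L\subset\partial X_0$ (note that a properly embedded disk is not contained in $X_0\setminus\partial X_0$, so this boundary version of the theorem is what the corollary actually invokes). The resulting compatible compact convex Lefschetz deformation $(\pi,W,\Omega_t,\Lambda_t,X,h)$ leaves the underlying smooth fibration, the page, and the disk $L$ unchanged, and makes $L$ Legendrian in $(\partial W,\mathrm{Ker}(\Lambda_1|_{\partial W}))$; the positive stabilization along $L$ is then, tautologically, a convex stabilization of $(\pi,W,\Omega_1,\Lambda_1,X,h)$. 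In particular the framing-matching issue you flag as ``the main obstacle'' never arises --- the stabilization is literally the same smooth operation before and after the deformation --- whereas in your formulation it could not be repaired, because the Dehn twist $\tau_S$ is never prescribed independently of the handle attachment along $\partial L$.
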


Convex stabilizations are special types of positive stabilizations where stabilizations are along properly embedded Legendrian disks on pages of boundary convex open books. These are originally introduced for compatible exact Lefschetz fibrations in \cite{AA} which are exactly what we call compatible compact convex Lefschetz fibrations in the present paper. We omit the proof of Corollary \ref{cor:any_stab_can_be_made_convex} as its being a direct consequence of definitions and Theorem \ref{thm:Legendrian_Realiz_in_convex_Lef_fib}. The result of this paper should be thought of a partial generalization of \cite{AO}, where in dimension four a surjection $ \left\{PALF's \right\} \longrightarrow \left\{ \mbox{Stein Manifolds} \right\} $ was established, here in any dimension we show the part of that result, which is an analogous map is well defined. 

In Section \ref{sec:convex_stabilization}, we will recall definitions and some properties of convex stabilizations for compatible compact convex Lefschetz fibrations and convex open books. In particular, we will give the proof of the fact that any convex stabilization of a compatible compact convex Lefschetz fibration on $W$ yields a compatible compact convex Lefschetz fibration on a Liouville domain $W'$ which is exact symplectomorphic to a positive expansion (see below) of $W$, and, in particular, with the induced structures $\partial W$ and $\partial W'$ are contactomorphic.

\medskip \noindent {\em Acknowledgments.\/} The authors would like to
thank Yasha Eliashberg for helpful conversations and remarks, and also Max Planck Institute for Mathematics for their hospitality.


\section{Complete convex symplectic (Liouville) manifolds}

We first recall complete convex symplectic manifolds and their deformations. They are also known as Liouville manifolds. The notions below can be found in \cite{EG}, \cite{CE}, \cite{S3}, and \cite{Mc}. An \emph{exact} \emph{symplectic manifold} is a manifold $W$ together with a symplectic form $\Omega$ and a $1$-form $\Lambda$ satisfying $\Omega=d\Lambda$. In such a case there is a global \emph{Liouville vector field} $Z$ of $\Omega$ defined by the equation $\iota_{Z} \Omega=\Lambda$. We will write exact symplectic manifolds as triples of the form $(W,d\Lambda,Z)$, or $(W,d\Lambda,\Lambda)$, or just as pairs $(W,\Lambda)$. A smooth map $\phi:(W_0,\Lambda_0)\to (W_1,\Lambda_1)$ between two exact symplectic manifolds is called an \emph{exact symplectomorphism} if $\phi^*(\Lambda_1)-\Lambda_0$ is exact.

\begin{definition}[\cite{CE}, \cite{S3}] \label{def:Liouville_manifold}
A \emph{complete convex symplectic manifold} (\emph{Liouville manifold}) is an exact symplectic manifold $(W,d \Lambda, Z)$ such that
\begin{itemize}
\item[(i)] the expanding vector field $Z$ is \emph{complete} (i.e., its flow exists for all times), and
\item[(ii)] there exists an exhaustion $W=\bigcup_{k=1}^{\infty} W^k$ by compact domains $W^k \subset W$ with smooth boundaries along which $Z$ points outward.
\end{itemize}
If $Z^{-t}:W \rightarrow W$  $(t>0)$ denotes the contracting flow of $Z$, then the \emph{core} (or \emph{skeleton}) of the Liouville manifold $(W,d \Lambda, Z)$ is defined to be the set
$$\textrm{Core}(W,\Lambda)=\textrm{Core}(W,d \Lambda,Z):=\bigcup_{k=1}^{\infty} \bigcap_{t>0} Z^{-t}(W^k).$$ A Liouville manifold is of \emph{finite type} if its core is compact. A \emph{Liouville cobordism} $(W,d \Lambda,Z)$ is a compact cobordism $W$ with an exact symplectic structure $(d\Lambda,Z)$ such that $Z$ points outwards along $\partial_+ W$ and inwards along $\partial_- W$. A Liouville cobordism with $\partial_- W=\emptyset$ is called a \emph{Liouville domain}.(One can see that this definition of Lioville domains is equivalent to the one given above.)
\end{definition}

\begin{remark} \label{rem:Collar_is_Symplectization}
Let $(W,d \Lambda,Z)$ be a finite type Liouville manifold and $V\subset W$ a compact domain containing the core with smooth boundary $M=\partial V$. Then the whole symplectization $(M \times \mathbb{R}, d(e^t \Lambda|_M))$ symplectically embeds into $W$. The contact manifold $(M,\Lambda|_M)$ is canonical and called \emph{the ideal contact boundary}  of $(W,d \Lambda,Z)$. Similarly, if $M$ is the convex boundary $\partial_+W (=\partial W)$ of a Liouville domain $(W,d \Lambda,Z)$, then the negative half of the symplectization $(M \times \mathbb{R}, d(e^t \lambda|_M))$ symplectically embeds into $W$ (as a collar neighborhood of $M$ in $W$) so that its complement in $W$ is $\textrm{Core}(W,d\Lambda,Z)$ and the embedding matches the positive $t$-direction of $\mathbb{R}$ with $Z$. Therefore, the completion of a Liouville domain is a finite type Liouville manifold. 
\end{remark}

Recall from \cite{CE} that a smooth family $(W,\Omega_t,Z_t)$, $t \in [0,1]$, of Liouville manifolds is called a \emph{simple Liouville homotopy} if there exists a smooth family of exhaustions $W = \cup_{k=1}^{\infty} W_t^k$ by compact domains $W_t^k \subset W$ with smooth boundaries along which $Z_t$ is outward pointing. A smooth family $(W,\Omega_t,Z_t)$, $t \in [0,1]$, of Liouville manifolds is called \emph{Liouville homotopy} if it is a composition of finitely many simple homotopies.

\begin{lemma} [\cite{CE}, Lemma 11.6 and Proposition 11.8] \label{lem:Liouville_homotopy}
A smooth family $(W,\Omega_t,Z_t)$, $t \in [0, 1]$, of Liouville manifolds of finite type is a Liouville homotopy if the closure $\overline{\cup_{t \in [0,1]}\emph{Core}(W,\Omega_t,Z_t)}$ of the union of their cores is compact. In particular, any smooth family of Liouville domains determines a Liouville homotopy of their completions. Moreover, homotopic Liouville manifolds are exact symplectomorphic. In particular, homotopic Liouville domains have exact symplectomorphic completions.
\end{lemma}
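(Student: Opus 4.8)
The statement bundles together two genuinely different assertions, and the plan is to treat them separately and then combine. The first assertion is a recognition criterion: compactness of $\overline{\cup_{t}\mathrm{Core}(W,\Omega_t,Z_t)}$ forces the family to be a Liouville homotopy. The second is the rigidity consequence: a Liouville homotopy produces an exact symplectomorphism between its endpoints. The two ``in particular'' clauses are then formal. Given a smooth family of Liouville domains on a fixed compact $W$, every core lies in the compact $W$, so the union of the cores of the completions (which by Remark \ref{rem:Collar_is_Symplectization} are of finite type, with cores still contained in $W$) has compact closure; the criterion then upgrades the family of completions to a Liouville homotopy, and the rigidity statement makes those completions exact symplectomorphic.

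For the criterion I would fix a compact domain $V$ with smooth boundary containing $K:=\overline{\cup_t\mathrm{Core}(W,\Omega_t,Z_t)}$. Away from its core each $(W,\Omega_t,Z_t)$ is, by Remark \ref{rem:Collar_is_Symplectization}, a piece of the symplectization of its ideal boundary, so the forward $Z_t$-flow of a hypersurface enclosing $K$ sweeps out the end of $W$. The only real task is to choose the exhausting domains smoothly in $t$ with $Z_t$ outward-pointing along all their boundaries simultaneously. Using continuity of $(t,x)\mapsto Z_t(x)$ and compactness of $[0,1]$, I would subdivide $[0,1]$ into finitely many subintervals on each of which the forward flow of $\partial V$ under $Z_t$ furnishes a single smoothly $t$-varying exhaustion $W=\cup_k W_t^k$ with $Z_t$ transverse and outward-pointing along each $\partial W_t^k$. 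Each subinterval then realizes a simple Liouville homotopy, and the concatenation is a Liouville homotopy by definition.

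The rigidity statement is a Moser argument, and here exactness comes for free. Seeking $\phi_t$ with $\phi_0=\mathrm{id}$ generated by a time-dependent field $X_t$, one computes
\begin{equation*}
\tfrac{d}{dt}\big(\phi_t^*\Lambda_t\big)=\phi_t^*\big(\dot\Lambda_t+\iota_{X_t}\Omega_t+d(\Lambda_t(X_t))\big),
\end{equation*}
so defining $X_t$ by $\iota_{X_t}\Omega_t=-\dot\Lambda_t$ (possible and unique since $\Omega_t$ is nondegenerate) collapses the right-hand side to $\phi_t^*d(\Lambda_t(X_t))=d\big(\Lambda_t(X_t)\circ\phi_t\big)$. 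Integrating in $t$ then exhibits $\phi_1^*\Lambda_1-\Lambda_0$ as an exact form, so $\phi_1$ is the desired exact symplectomorphism as soon as it exists.

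That last clause is precisely the main obstacle: on the noncompact $W$ the Moser field $X_t=-\Omega_t^{-1}(\dot\Lambda_t)$ need not generate a global flow, and completeness must be established near infinity. Here I would exploit the exhaustions produced for the criterion. In the symplectization coordinates on the end one has $\Lambda_t=e^s\alpha_t$ for contact forms $\alpha_t$ on the ideal boundary, and after applying Gray stability to the isotopic contact structures $\ker\alpha_t$ one can normalize $\Lambda_t$ to be independent of $t$ outside a fixed compact set. Then $\dot\Lambda_t$, and hence $X_t$, is compactly supported, its flow is automatically complete, and the argument closes. Controlling $X_t$ at infinity — equivalently, standardizing the Liouville form near the ideal boundary along the homotopy — is where essentially all the work lies, and the compactness-of-cores hypothesis is exactly what guarantees the end can be put into this normal form uniformly in $t$.
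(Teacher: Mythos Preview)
The paper does not prove this lemma; it is quoted with attribution to \cite{CE} (Lemma~11.6 and Proposition~11.8) and used as a black box in the proof of Theorem~\ref{thm:Legendrian_Realiz_in_convex_Lef_fib}. There is therefore no in-paper argument against which to compare yours.

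Your sketch is essentially the standard proof from \cite{CE}. The Moser computation is correct, and you have correctly isolated completeness of $X_t$ as the only nontrivial issue and the compactness-of-cores hypothesis as what resolves it. Two small points where more care is needed: in the criterion, ``$Z_t$ transverse to $\partial V$ on each subinterval'' is not automatic for an arbitrary $V\supset K$ and must be arranged (e.g.\ by first flowing $\partial V$ forward under $Z_{t_0}$ and invoking openness of transversality); and in the normalization at infinity, Gray stability only yields $\psi_t^*\alpha_t=f_t\alpha_0$, so one must also absorb the conformal factor by the shift $s\mapsto s+\log f_t$ in the symplectization coordinate before $\Lambda_t$ becomes genuinely $t$-independent on the end. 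With those filled in, the argument is complete.
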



Next we define the positive expansion (which will be used in the last section) of a Liouville domain for any given smooth strictly positive real-valued function on its boundary. Given a compact manifold with boundary and a strictly positive smooth function $f:\partial W \to \R$, let $W^{cob}_f$ denote the set 
$$W^{cob}_f:=\{(x,t)\;|\; x \in \partial W, \; 0\leq t \leq f(x) \}\subset \partial W \times [0,\infty)$$
which is the region in $W \times [0,\infty)$ under the graph
$$\textrm{Graph}(f):=\{(\,x,f(x) \,)\;|\; x \in \partial W \}.$$

\begin{definition} \label{def:f-positive_Weinstein_expansion}
Let $(W,\Omega,Z)$ be a Liouville domain and $f:\partial W \to \R$ any strictly positive smooth function. The \emph{positive $f$-expansion} of $(W,\Omega,Z)$ is a Liouville domain, denoted by $(W,\Omega,Z)_f,$ obtained by gluing the Liouville cobordism 
$$(W^{cob}_f, d(e^t (\iota_Z \Omega)|_{\partial W}), \partial / \partial t)$$ (between $\partial_-W^{cob}_f=\partial W \times 0$ and $\partial_+W^{cob}_f=\textrm{Graph}(f)$)  to $W$ via the map
$\partial W \to \partial_-W^{cob}_f$ given by $x \mapsto (x,0)$.
\end{definition}

Note that by construction $(W,\Omega,Z)$ has a codimension zero Liouville embedding into $(W,\Omega, Z)_f$. Also $(W,\Omega,Z)_f$ is diffeomorphic to $W$ since $W^{cob}_f$ is a smooth trivial cobordism (i.e., diffeomorphic to the product $\partial W \times [0,1]$) for any strictly positive smooth function $f$. 


\begin{remark} \label{rem:no_symplecto_but_contacto}
Note that a positive $f$-expansion $W'$ of a Liouville domain $W$ is not symplectomorphic to $W$. However, $W'$ and $W$ have contactomorphic boundaries. To see this, note that $\partial W=\partial_-W^{cob}_f=\partial W \times 0$ and $\partial W'=\partial_+W^{cob}_f=\textrm{Graph}(f)$ are cobordant via the Liouville cobordism  $(W^{cob}_f, d(e^t (\iota_Z \Omega)|_{\partial W}), \partial / \partial t)$. The contact structure $\xi$ (resp. $\xi'$) on $\partial W$ (resp. $\partial W'$) is the kernel of the contact form $\alpha=e^0 (\iota_Z \Omega)|_{\partial W}$ (resp. $\alpha'=e^k(\iota_Z \Omega)|_{\partial W'}$). Also by following the trajectories of the Liouville vector field $\partial / \partial t$, we obtain a diffeomorphism $F:\partial W=\partial W \longrightarrow \partial W'$. Then we compute that $F^*(\alpha')=e^f \alpha$ (so $F_*$ maps $\xi$ to $\xi'$). Hence, $(\partial W,\xi)$ and $(\partial W',\xi')$ are contactomorphic.
\end{remark}


\section{Compact convex Lefschetz fibrations} \label{sec:Com_con_Lefschetz_fib}

Exact (compact convex) Lefschetz fibrations are introduced in \cite{S1,S2} (see also \cite{Mc}).
Let $\pi:W^{2n+2} \to S$ be a differentiable fiber bundle, denoted by $(\pi,W)$, whose fibers and base
are compact connected manifolds with boundary. The boundary of such an $W$ consists of two parts: The vertical part $\partial_v W;=\pi^{-1}(\partial S)$, and the horizontal part $\partial_h W:=\bigcup_{z \in S} \partial W_z$ where $W_z=\pi^{-1}(z)$ is the fiber over $z \in S$. 

\begin{definition} [\cite{S1,S2}] \label{def:Exact_Symplectic_Fibration}
An \emph{compact convex symplectic fibration} $(\pi,W,\omega,\lambda)$ over a bordered surface $S$ is a differentiable fiber bundle $(\pi,W)$ equipped with a $2$-form $\omega$ and a $1$-form $\lambda$ on $W$, satisfying $\omega=d\lambda$, such that
\begin{itemize}
\item[(i)] each fiber $W_z$ with $\omega_z=\omega|_{W_z}$ and $\lambda_z=\lambda|_{W_z}$ is a compact convex symplectic manifold,
\item[(ii)] the following triviality condition near $\partial_h W$ is satisfied: Choose a point $z \in S$ and consider the trivial fibration $\tilde{\pi} : \tilde{W}:= S \times W_z \to S$ with the forms $\tilde{\omega}, \tilde{\lambda}$ which are pullbacks of  $\omega_z,\lambda_z$, respectively. Then there should be a fiber-preserving diffeomorphism $\Xi:N \to \tilde{N}$ between neighborhoods $N$ of $\partial_h W$ in $W$ and $\tilde{N}$ of $\partial_h \tilde{W}$ in $\tilde{W}$ which maps $\partial_h W$ to $\partial_h \tilde{W}$, equals the identity on $N \cap W_z$, and $\Xi ^*\tilde{\omega}=\omega$ and $\Xi ^* \tilde{\lambda}=\lambda$.
\end{itemize}
\end{definition}

\begin{definition} [\cite{S1,S2}] \label{def:Exact_Lefschetz_Fibration}
A \emph{compact convex Lefschetz fibration} is a tuple $(\pi,W,S,\omega,\lambda,J_0,j_0)$ which satisfies the following conditions:
\begin{itemize}
\item[(i)] $\pi:W \to S$ is allowed to have finitely many critical points all of which lie in the interior of $W$.
\item[(ii)] $\pi$ is injective on the set $C$ of its critical points.
\item[(iii)] $J_0$ is an integrable complex structure defined in a neighborhood of $C$ in $W$ such that $\omega$ is a K\"ahler form for $J_0$.
\item[(iv)] $j_0$ is a positively oriented complex structure on a neighborhood of the set $\pi(C)$ in $S$ of the critical values.
\item[(v)] $\pi$ is $(J_0,j_0)$-holomorphic near $C$.
\item[(vi)] The Hessian of $\pi$ at any critical point is nondegenerate as a complex quadratic form, in other words, $\pi$ has nondegenerate complex second derivative at each its critical point.
\item[(vii)]  $(\pi,W\setminus \pi^{-1}(\pi(C)),\omega,\alpha)$ is a compact convex symplectic fibration over $S \setminus \pi(C)$.
\end{itemize}
\end{definition}

\begin{remark} \label{rem:Exact_Lefschetz_Fibration}
For the codimension two corners (i.e., $\partial_v W \cap \partial_h W$) of the total space $W$  there is canonical way of smoothening. Assuming such smoothening has been made, all total spaces will be assumed to be smooth through out the paper. Also the statements (ii)-(vi) guarantees that the singularities of $\pi$ are of Lefschetz type as in Definition \ref{def:Lefschetz_Fibration}. Let us consider a pair $(J,j)$ where $J$ is an almost complex structure on $W$ agreeing with $J_0$ near $C$ and $j$ is a positively oriented complex structure on $S$ agreeing with $j_0$ near $\pi(C)$ such that $\pi$ is $(J,j)$-holomorphic and $\omega(\cdot,J \cdot)|_{\textrm{Ker}\,(\pi_*)}$ is symmetric and positive definite everywhere. As pointed out in \cite{S1}, the space of such pairs is always contractible, and in particular, always nonempty, and furthermore, once we fixed $(J,j)$, we can modify $\omega$ by adding a positive $2$-form on $S$ so that it becomes symplectic and tames $J$ everywhere on $W$.
\end{remark}

\begin{theorem} [\cite{Mc}] \label{thm:Mc} 
Suppose that the base surface of a compact convex Lefschetz fibration $(\pi,W,S,\omega,\lambda,J_0,j_0)$ is a compact convex symplectic manifold $(S,\lambda_S)$. Then there exists a constant $K > 0$ such that for all $k \geq K$ we have $\Omega := \omega + k\pi^*(d\lambda_S)$ is a symplectic form, and the $\Omega$-dual $Z$ of $\Lambda:=\lambda + k\pi^*(\lambda_S)$ is transverse to $\partial W$ and pointing outwards. (In other words, $(W,\Omega,\Lambda)$ is a compact convex symplectic manifold, i.e., a Liouville domain.)
\end{theorem}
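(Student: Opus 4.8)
The plan is to verify the two defining properties of a Liouville domain for $(W,\Omega,\Lambda)$ separately: that $\Omega$ is symplectic, and that its $\Omega$-dual (Liouville) field $Z$ points strictly outward along $\partial W$. Observe first that
$$d\Lambda = d\lambda + k\,\pi^*(d\lambda_S) = \omega + k\,\pi^*(d\lambda_S) = \Omega,$$
so $\Omega$ is automatically closed and exact with primitive $\Lambda$, and $Z$ is by construction its Liouville field. Hence only nondegeneracy of $\Omega$ and the boundary behavior of $Z$ require argument, and in both the governing idea is that the summand $k\,\pi^*(d\lambda_S)$ dominates the fixed contributions once $k$ is large.

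For nondegeneracy I would run a taming/compactness argument. Fix a compatible pair $(J,j)$ as in Remark~\ref{rem:Exact_Lefschetz_Fibration}, so that $\pi$ is $(J,j)$-holomorphic and $\omega(\cdot,J\cdot)$ is positive definite on the vertical bundle $\mathrm{Ker}(\pi_*)$. Since $\pi$ is holomorphic and $d\lambda_S$ tames $j$ on the surface $S$, for every $v\in TW$ one has $\pi^*(d\lambda_S)(v,Jv)=d\lambda_S(\pi_*v,\,j\,\pi_*v)\ge 0$, with equality exactly when $v$ is vertical. Working on the compact unit sphere bundle of $W$ for an auxiliary metric, the set $A$ where $\omega(v,Jv)\le 0$ is compact and disjoint from the vertical directions (there $\omega(v,Jv)>0$); thus $v\mapsto \pi^*(d\lambda_S)(v,Jv)$ attains a strictly positive minimum $\delta$ on $A$, while $\omega(v,Jv)\ge -M$ on the whole sphere bundle. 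Choosing $k>M/\delta$ gives $\Omega(v,Jv)=\omega(v,Jv)+k\,\pi^*(d\lambda_S)(v,Jv)>0$ on $A$, and on the complement both summands are nonnegative with $\omega(v,Jv)>0$. Hence $\Omega$ tames $J$, so it is nondegenerate, and being closed it is symplectic.

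To control $Z$ I would split $TW=\mathrm{Vert}\oplus\mathrm{Hor}$ away from the (interior) critical points, with $\mathrm{Hor}$ the $\omega$-orthogonal complement of $\mathrm{Vert}$. Pairing $\iota_Z\Omega=\Lambda$ against vertical vectors annihilates the $\pi^*$-terms and yields $\omega(Z^{\mathrm{vert}},\cdot)=\lambda$ on $\mathrm{Vert}$, so $Z^{\mathrm{vert}}$ is exactly the fiberwise Liouville field, independent of $k$. Pairing against horizontal vectors and writing $Z^{\mathrm{hor}}=\widetilde{Z_S}+R_k$, where $\widetilde{Z_S}$ is the horizontal lift of the base Liouville field $Z_S$, the $k$-order terms cancel via $\iota_{Z_S}d\lambda_S=\lambda_S$, leaving $\Omega(R_k,\cdot)=\beta$ on the rank-two bundle $\mathrm{Hor}$ for a fixed $1$-form $\beta$. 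Since $\Omega|_{\mathrm{Hor}}$ is dominated by $k\,\pi^*(d\lambda_S)|_{\mathrm{Hor}}$ (nondegenerate, as $d\lambda_S$ is an area form on $S$), inverting gives $R_k=O(1/k)$, whence $\pi_*Z=Z_S+\pi_*R_k\to Z_S$ uniformly. Along $\partial_h W$ the triviality condition makes $\mathrm{Hor}$ tangent to $\partial_h W$, so the outward part of $Z$ comes entirely from $Z^{\mathrm{vert}}$, which points outward along each $\partial W_z$; this requires no largeness of $k$. Along $\partial_v W=\pi^{-1}(\partial S)$ the outward directions are detected by $\pi_*Z=Z_S+\pi_*R_k$, and since $Z_S$ points strictly outward along $\partial S$ while $\pi_*R_k\to 0$, the field $Z$ points strictly outward there once $k$ is large.

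The main obstacle is precisely this last point: extracting a single threshold $K$ so that, for all $k\ge K$, $\Omega$ is nondegenerate and $\pi_*Z$ points outward along the whole of $\partial_v W$ simultaneously. Both conditions rest on the same mechanism (the base summand swamping fixed terms), so I would read $K$ off the compactness estimates above applied on the compact boundary, and then check compatibility across the smoothed codimension-two corner $\partial_v W\cap\partial_h W$, where $Z$ is a positive combination of an outward fiber-radial vector and an outward base-radial vector, hence outward for the smoothed boundary.
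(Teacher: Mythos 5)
Your proof is correct, but note at the outset that there is nothing in this paper to compare it against: Theorem \ref{thm:Mc} is stated without proof, being imported from McLean \cite{Mc}, so the relevant benchmark is the source. Measured against that, your two-step scheme is essentially the Seidel--McLean argument itself: the nondegeneracy step is the Thurston trick on the compact unit sphere bundle, using $(J,j)$-holomorphicity of $\pi$ so that $\pi^*(d\lambda_S)(v,Jv)=d\lambda_S(\pi_*v,j\pi_*v)\ge 0$ with equality exactly on $\mathrm{Ker}(\pi_*)$ --- precisely the mechanism already alluded to in Remark \ref{rem:Exact_Lefschetz_Fibration} --- and the boundary analysis via the splitting $Z=Z^{\mathrm{vert}}+\widetilde{Z_S}+R_k$ with $R_k=O(1/k)$ is how convexity is obtained in \cite{Mc}. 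Three details deserve to be pinned down. First, the splitting exists only where $\pi$ is a submersion, but this is harmless: a critical point over $\partial S$ would lie in $\pi^{-1}(\partial S)=\partial_v W\subset\partial W$, contradicting condition (i) of Definition \ref{def:Exact_Lefschetz_Fibration}, so $\pi(C)$ lies in the interior of $S$ and all of your estimates along $\partial_v W$ take place over a compact region on which $\Omega|_{\mathrm{Hor}}$ is uniformly invertible, which is exactly what the bound $R_k=O(1/k)$ requires. Second, near $\partial_h W$ your error form $\beta=\lambda(\cdot)-\omega(\widetilde{Z_S},\cdot)$ vanishes identically on $\mathrm{Hor}$: in the trivialization both $\lambda$ and $\omega$ are pulled back from the fiber, so they annihilate $\mathrm{Hor}=TS\times 0$; hence $R_k\equiv 0$ there, and at the corner $Z=Z^{\mathrm{vert}}+\widetilde{Z_S}$ exactly, a sum of the two outward radial fields --- so your corner-smoothing remark becomes literally correct once the rounding is chosen with conormal a nonnegative combination of the two boundary conormals, with no $k$-dependence hidden in the corner. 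Third, condition (vii) of Definition \ref{def:Exact_Lefschetz_Fibration} as stated gives horizontal triviality only over $S\setminus\pi(C)$; to conclude that $\mathrm{Hor}$ is tangent to $\partial_h W$ also over the critical values you should invoke Seidel's formulation in \cite{S2}, where the trivialization holds near all of $\partial_h W$ (legitimate here since the critical points are interior). With these points explicit, taking $K$ to be the maximum of the taming threshold and the $\partial_v W$-outwardness threshold, both furnished by compactness, completes an argument matching the cited one.
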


\begin{definition} [\& Notation](\cite{AA}) \label{def:compatible_exact_mfld_fib}
A compact convex symplectic manifold $(W,\Omega,\Lambda)$ and a compact convex Lefschetz fibration $(\pi,W,S,\omega,\lambda,J_0,j_0)$ are said to be \emph{compatible} if for a pair $(J,j)$ as in Remark \ref{rem:Exact_Lefschetz_Fibration} there exists a positive volume form $\omega_S$ on $S$ such that  $$\Omega=\omega + \pi^*(\omega_S)$$ and $\Omega$ tames $J$ everywhere on $E$. From now on, we'll always take $S=D^2$, and assuming the above choice of $(J,j)$ is already made, $(J_0,j_0)$ will be dropped from the notation. We will denote a compatible compact convex Lefschetz fibration (over $D^2$) on the compact convex symplectic manifold $(W,\Omega,\Lambda)$ by the tuple $(\pi,W,\Omega,\Lambda,X,h)$ as stated in the introduction.
\end{definition}

\begin{definition} [\cite{AA}]
An open book induced by a compatible compact convex Lefschetz fibration over the disk $D^2$ is called a \emph{convex} (\emph{exact}) \emph{open book}.
\end{definition}

As mentioned before compatible compact convex Lefschetz fibrations are studied under the name ``compatible exact Lefschetz fibrations'' in \cite{AA}. Therefore, the following theorem is the restatement of Theorem 4.7. of \cite{AA} in the terminology of the present paper.

\begin{theorem} \label{thm:Liouville_Openbooks_Support}
The convex open book $(X,h)$ induced by a compatible compact convex Lefschetz fibration $(\pi,W,\Omega,\Lambda,X,h)$ caries the induced contact structure $\emph{Ker}(\Lambda|_{\,\partial W})$ on $\partial W$. \qed
\end{theorem}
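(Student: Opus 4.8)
The plan is to produce an explicit supporting contact form and verify the three defining conditions of ``carries'' directly. By Theorem \ref{thm:Mc} (with $S = D^2$) the Liouville form splits as $\Lambda = \lambda + k\,\pi^*(\lambda_{D^2})$, where I take $\lambda_{D^2} = \tfrac12 r^2\,d\theta$ in polar coordinates $(r,\theta)$, so that $d\lambda_{D^2}$ is the area form and its Liouville field $\tfrac{r}{2}\partial_r$ points outward along $\partial D^2$. The candidate supporting form is $\alpha := \Lambda|_{\partial W}$, which is already contact for $\xi = \mathrm{Ker}(\Lambda|_{\partial W})$ by the general fact for Liouville domains recalled in the introduction, so the task reduces to checking conditions (i)--(iii). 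Writing $\partial W = \partial_v W \cup \partial_h W$ after smoothing the corner $\partial_v W \cap \partial_h W$ (Remark \ref{rem:Exact_Lefschetz_Fibration}), where $\partial_v W = \pi^{-1}(\partial D^2)$ is the generalized mapping torus $\bar X(h)$ and $\partial_h W \cong D^2 \times \partial X$ by the triviality condition of Definition \ref{def:Exact_Symplectic_Fibration}, the induced open book has binding $B = \{0\} \times \partial X$, fibration $\Theta = \arg\circ\,\pi$, and page the full fiber $\bar X$ with monodromy $h$.

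On the mapping-torus piece the verification is immediate. There $\pi$ takes values in $\partial D^2 = \{r = 1\}$, so $\pi^*(\lambda_{D^2}) = \tfrac12\,\pi^*(d\theta)$ and $\alpha|_{\partial_v W} = \lambda + \tfrac{k}{2}\,\pi^*(d\theta)$, which is precisely the standard (Thurston--Winkelnkemper type) supporting form on a mapping torus. On a page $X_\theta = \pi^{-1}(e^{i\theta})$ the base angle is constant, so $\pi^*(d\theta)$ vanishes on $TX_\theta$ and $d\alpha|_{X_\theta} = d\lambda|_{X_\theta} = \omega|_{X_\theta}$, the fiber symplectic form, which is nondegenerate because each fiber is a Liouville domain by Definition \ref{def:Exact_Symplectic_Fibration}(i); this gives condition (ii) on the interior of every page.

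The main obstacle is the behavior near the binding, where the page closes up through $\partial_h W$ and the corner has been smoothed. I would model a neighborhood of $B$ using the triviality condition: near $\partial_h W$ the total space looks like $D^2 \times (\partial X \times (-\epsilon,0]_s)$ with $\lambda = e^s\lambda_{\partial X}$ the fiber Liouville collar and $\pi^*(\lambda_{D^2}) = \tfrac12 r^2\,d\theta$, so that $\alpha = e^s\lambda_{\partial X} + \tfrac{k}{2}r^2\,d\theta$. Along $B = \{0\}\times\partial X$ one has $r = 0$ and $s = 0$, whence $\alpha|_{TB} = \lambda_{\partial X}$, the contact form on the convex boundary of the fiber, giving condition (i). The crux is the corner smoothing: one must identify the disk radial direction $r$ with the fiber Liouville collar coordinate $s$ so that the closed-up page acquires exactly the fiber symplectic form up to and including the binding, keeping $d\alpha$ nondegenerate on each page-slice $[0,1]_r \times \partial X$ and preserving $\alpha\wedge(d\alpha)^n > 0$ across the smoothed corner. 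This matching is the heart of the argument, and once it is in place the orientation statement (iii) reduces to the Liouville-domain convention already built into each fiber, so that $\alpha$ supports $(X,h)$ as claimed.
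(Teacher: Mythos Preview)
The paper does not actually prove this statement: it is presented as a restatement of Theorem 4.7 of \cite{AA} in the present terminology, and the \qed\ at the end of the theorem signals that no proof is given here. So there is nothing in the paper to compare your argument against beyond the citation.

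That said, your outline is the natural Thurston--Winkelnkemper style verification and is almost certainly what the cited proof in \cite{AA} does. Your treatment of the vertical boundary is correct: restricting $\Lambda = \lambda + k\pi^*(\lambda_{D^2})$ to a page kills the $d\theta$ term and leaves $d\lambda$, which is the fiber symplectic form; and on the binding the $r^2\,d\theta$ term vanishes, leaving the fiber boundary contact form. Where your write-up is genuinely incomplete is exactly where you say it is: the corner-smoothing step. You describe the model $\alpha = e^s\lambda_{\partial X} + \tfrac{k}{2}r^2\,d\theta$ on a product collar, but you then need to specify an actual smoothing of the corner $\{r=1\}\cap\{s=0\}$ (e.g.\ by a curve $(r(\tau),s(\tau))$ in the $(r,s)$-quadrant), compute $\alpha$ and $d\alpha$ along it, and check both that $d\alpha$ stays symplectic on each page-slice and that $\alpha\wedge(d\alpha)^n>0$ throughout the interpolation region. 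This is a routine but nontrivial computation (one typically needs $k$ large enough, which is already provided by Theorem~\ref{thm:Mc}), and without it conditions (ii) and (iii) are asserted rather than verified across the smoothed corner. If you intend this as a self-contained proof rather than a sketch, that step must be written out.
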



\vspace{.05in}
\section{The proof of Theorem \ref{thm:Legendrian_Realiz_in_convex_Lef_fib}} \label{sec:the_proof}

The proof of the theorem will make use of the following proposition.

\begin{proposition} \label{prop:Extending_the_homotopy}
Let $(\pi,W,\Omega,\Lambda,X,h)$ be any compatible compact convex Lefschetz fibration, and denote by $(d\alpha_0,\alpha_0)$ the restriction of the Liouville structure $(\Omega,\Lambda)$ to the fixed page $X_0 \approx X$ of the boundary convex open book $(X,h)$. Let $G_t=(X_0,\alpha_{t})$, $t \in [0,1]$, be any homotopy of Liouville domains on $X_0$.
Then there exists a compatible compact convex Lefschetz deformation $(\pi,W,\Omega_t,\Lambda_t,X,h)$, $t \in [0,1]$, such that the following hold\\
\begin{itemize}
\item [(i)] The smooth $1$-parameter family $H_t=(\Omega_t,\Lambda_t)$ on $W$ defines a homotopy of Liouville domains from $(\Omega_0,\Lambda_0)=(\Omega,\Lambda)$ to another Liouville structure $(\Omega_1,\Lambda_1)=(\Omega',\Lambda')$. \\
\item [(ii)]$\xi_t=\emph{Ker}(\Lambda_t|_{\partial W})$ defines an isotopy of contact structures on $\partial W$ from $\xi_0=\emph{Ker}(\Lambda|_{\partial W})$ to $\xi_1=\emph{Ker}(\Lambda'|_{\partial W})$.\\
\item [(iii)] For each $t \in [0,1]$ the convex open book induced by $(\pi,W,\Omega_t,\Lambda_t,X,h)$ carries $\xi_t$.\\
\item[(iv)] $G_t=H_t|_{X_0}$. That is, the restriction of $H_t$ to the page $X_0$ is equal to $G_t$.
\end{itemize}
\end{proposition}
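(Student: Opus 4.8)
The plan is to promote the given fiberwise homotopy $\alpha_t$ to a homotopy of the underlying exact symplectic fibration data and then to invoke McLean's Theorem \ref{thm:Mc} with a constant that is uniform in $t$. I would first record that, since $\pi$ is constant along fibers, both $\pi^*\lambda_S$ and $\pi^*\omega_S$ vanish on every fiber; hence $\Lambda|_{X_0}=\lambda|_{X_0}=\alpha_0$ and $\Omega|_{X_0}=\omega|_{X_0}=d\alpha_0$. Thus it suffices to deform the fiberwise primitive $\lambda$ into a smooth family $\lambda_t$ (with $\lambda_0=\lambda$) that restricts to a convex symplectic structure on every fiber, restricts to $\alpha_t$ on the page $X_0$, and is left unchanged near $\partial_h W$ and near the critical points; one then sets $\Omega_t=d\lambda_t+\pi^*\omega_S$ and $\Lambda_t=\lambda_t+\pi^*\lambda_S$.

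To build $\lambda_t$ I would work over a closed collar annulus $A\subset D^2$ of $\partial D^2$ containing the base point $a_0$ (so $X_0=\pi^{-1}(a_0)$) and no critical value, so that $N:=\pi^{-1}(A)$ is an honest symplectic fibration over $A$. Choose a smooth cutoff $\chi:D^2\to[0,1]$ with $\chi\equiv 0$ outside the interior of $A$ and $\chi(a_0)=1$, supported away from a fixed radial slit of $A$ placed at a seam angle far from $a_0$. Over the simply connected region $A$ minus this slit, use the symplectic connection of $\omega$ (a Moser argument) to trivialize the fibration so that $\lambda$ restricts to $\alpha_0$ on each fiber, the gluing across the slit being the monodromy $h$, an exact symplectomorphism of $(X,d\alpha_0)$. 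Now put $\lambda_t:=\lambda+\bigl(\alpha_{\chi(\pi(\cdot))\,t}-\alpha_0\bigr)$ on $N$, reading the parenthesized difference as a fiberwise $1$-form via this trivialization, and $\lambda_t:=\lambda$ on $W\setminus N$. Because $s\mapsto\alpha_s$ is a path of Liouville forms, $\lambda_t$ restricts to a convex symplectic structure on each fiber; because $\chi$ vanishes near $\partial A$, near the slit and near every critical value, $\lambda_t$ equals $\lambda$ near $\partial_h W$, near the gluing seam, and near the critical points, so the triviality condition of Definition \ref{def:Exact_Symplectic_Fibration} and the data $(J_0,j_0)$ of Definition \ref{def:Exact_Lefschetz_Fibration} survive unchanged. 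By construction $\lambda_0=\lambda$ and $\lambda_t|_{X_0}=\alpha_t$.

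The step needing the most care, and the one I expect to be the main obstacle, is verifying that this localized deformation still defines a compatible compact convex Lefschetz fibration with monodromy $h$ for every $t$. Since the deformation is supported away from the gluing slit, the mapping-torus gluing map, hence the smooth monodromy, remains $h$; and the monodromy of $(\pi,W,\Omega_t,\Lambda_t)$ is automatically an (exact) symplectomorphism because symplectic parallel transport for the closed form $\omega_t=d\lambda_t$ preserves the fiber symplectic forms, so condition (iii) in the definition of a compatible fibration persists (with $h$ understood up to the smooth isotopy class fixed by the unchanged underlying smooth fibration $(\pi,W,X,h)$). Applying Theorem \ref{thm:Mc} then makes $(W,\Omega_t,\Lambda_t)$ a Liouville domain compatible with $\pi$; since the family is smooth on the compact interval $[0,1]$ and $\lambda_t$ is fixed outside the relatively compact set $N$, a single constant $K$ works for all $t$, so $(\pi,W,\Omega_t,\Lambda_t,X,h)$ is a genuine compatible compact convex Lefschetz deformation.

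Finally I would read off (i)--(iv). Item (iv) is immediate, as $\Lambda_t|_{X_0}=\lambda_t|_{X_0}=\alpha_t$ and $\Omega_t|_{X_0}=d\alpha_t$, so $H_t|_{X_0}=G_t$. Item (i) follows from Lemma \ref{lem:Liouville_homotopy}, since a smooth family of Liouville domains is a Liouville homotopy whose completions are exact symplectomorphic, connecting $(\Omega,\Lambda)$ to $(\Omega',\Lambda')=(\Omega_1,\Lambda_1)$. Item (ii) holds because each $\Lambda_t|_{\partial W}$ is a contact form on the convex boundary and the family is smooth, giving the isotopy $\xi_t=\mathrm{Ker}(\Lambda_t|_{\partial W})$. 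Item (iii) is Theorem \ref{thm:Liouville_Openbooks_Support} applied to each compatible fibration $(\pi,W,\Omega_t,\Lambda_t,X,h)$, whose induced convex open book therefore carries $\mathrm{Ker}(\Lambda_t|_{\partial W})=\xi_t$.
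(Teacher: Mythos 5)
Your overall strategy is the same as the paper's: trivialize $\pi$ over a region of a boundary collar of $D^2$ containing the base point of $X_0$ and free of critical values, taper the given fiberwise homotopy $\alpha_t$ by a cutoff in the base coordinates, extend by $\lambda$ elsewhere, and then invoke Theorem \ref{thm:Mc}, Lemma \ref{lem:Liouville_homotopy}, Theorem \ref{thm:Liouville_Openbooks_Support}, and Gray stability. One small inaccuracy along the way: symplectic parallel transport cannot in general arrange that $\lambda$ restricts to \emph{exactly} $\alpha_0$ on every nearby fiber. What is true (Lemma 1.1 of \cite{S2}, which is what the paper uses) is $\Psi^*\lambda=\alpha_0+f_1\,dx_1+f_2\,dx_2+dR$ with $f_1,f_2,R$ vanishing near the horizontal boundary, so your formula for $\lambda_t$ must carry these correction terms along; since $R$ vanishes near $\partial X_0$, the deformed fibers are still Liouville domains and this is harmless.

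The genuine gap is the last sentence of your third paragraph: ``a single constant $K$ works for all $t$.'' Smoothness on the compact interval $[0,1]$ together with the compact support of the deformation does yield a finite \emph{uniform} threshold $K_{upp}$ with $K_t\leq K_{upp}$ for all $t$ --- this is exactly how the paper argues --- but nothing forces this uniform constant to equal the original $K$ appearing in $\Lambda=\lambda+K\pi^*(\lambda_{D^2})$. The homotopy $\alpha_t$ is arbitrary (not small), and it changes $\lambda_t$ on $\pi^{-1}(A)$, a region meeting the vertical boundary $\partial_v W$; for $t>0$ the $d\bigl(\lambda_t+K\pi^*(\lambda_{D^2})\bigr)$-dual vector field of $\lambda_t+K\pi^*(\lambda_{D^2})$ may fail to point outward along $\partial W$, and Theorem \ref{thm:Mc} only guarantees \emph{some} $K_t$, possibly larger than $K$. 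If you then run your family with a uniform constant $K'>K$, your time-zero structure is $\lambda+K'\pi^*(\lambda_{D^2})\neq\Lambda$, so condition (i), which demands $(\Omega_0,\Lambda_0)=(\Omega,\Lambda)$ on the nose, fails. The paper repairs precisely this point by a two-stage concatenation: first the linear homotopy $F^1_t=\bigl(W,\lambda+[(1-t)K+tK']\pi^*(\lambda_{D^2})\bigr)$ with $K'=\mathrm{Max}(K,K_{upp})$, which is a homotopy of Liouville domains because every constant $\geq K$ satisfies the conclusion of Theorem \ref{thm:Mc}, and only then the fiberwise deformation $F^2_t=\bigl(W,\lambda'_t+K'\pi^*(\lambda_{D^2})\bigr)$ at the fixed constant $K'$. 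Inserting this concatenation (with the corresponding constant family of Lefschetz fibrations on the first half of the parameter interval) closes the gap, and the rest of your deductions of (i)--(iv) go through as in the paper.
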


\begin{proof} Without loss of generality let $\pi$ fiber over the unit disk $D^2=\{z\in \C\, | \, |z| \leq 1\}$. By definition there is a compact convex Lefschetz fibration $(\pi,W,D^2,\omega,\lambda,J_0,j_0)$ such that $$\Lambda=\lambda + K\pi^*(\lambda_{D^2})$$ for some $K>0$ where $\lambda_{D^2}$ is some convex symplectic structure on $D^2$. Note that $\pi$ has no singular fibers in a neighborhood of $X_0$ in $W$. Say $X_0=\pi^{-1}(e^{i\theta_0})$ for some $e^{i\theta_0}\in S^1$. For $0<\epsilon$, $0<\delta<\pi$, let us write $l(\theta_0,\delta)$ for the arc $\{e^{i\theta} \; | \; \theta_0-\delta\leq \theta \leq \theta_0+\delta\} \subset S^1$. Also let $A_{\theta_0}(\epsilon,\delta)$ denote the region in $D^2$ given by $$A_{\theta_0}(\epsilon,\delta)=\{re^{i\theta} \in \C  \; | \; 1-\epsilon \leq r \leq 1, \quad e^{i\theta} \in l(\theta_0,\delta)\}.$$ Choose $\epsilon,\delta$ small enough so that $\pi$ has no singular fibers over $A_{\theta_0}(\epsilon,\delta)$. Then, by Lemma 1.1 in \cite{S2}, there is a local trivialization for $\pi$ around $X_0$. More precisely, there exists a fiber-preserving diffeomorphism $\Psi: A_{\theta_0}(\epsilon,\delta) \times X_0 \longrightarrow \pi^{-1}(A_{\theta_0}(\epsilon,\delta))$ such that
$$\Psi |_{\{0\} \times X_0} = \textrm{id} \quad \textrm{and} \quad
\Psi^*\lambda = \alpha_0 + f_1 dx_1 + f_2 dx_2+ dR$$
where $f_1,f_2,R \in C^{\infty}(A_{\theta_0}(\epsilon,\delta) \times X_0,\R)$ are functions which vanish
near $A_{\theta_0}(\epsilon,\delta) \times \partial X_0$ and $x_1,x_2$ are real coordinates on $D^2$.
Denote by $X_{r,\theta}$ the fiber $\pi^{-1}(re^{i\theta} )=\Psi(re^{i\theta} \times X_0)\subset W$ over the point $re^{i\theta} \in A_{\theta_0}(\epsilon,\delta)$. (Note $X_0=X_{1,\theta_0}$ under this correspondence.)  Next, we pick two smooth cut-off functions $y=f(r)$ and $y=g(\theta)$ whose graphs are given in Figure \ref{fig:cut_off_functions}.\\ 

\begin{figure}[ht]
\begin{center}
\includegraphics{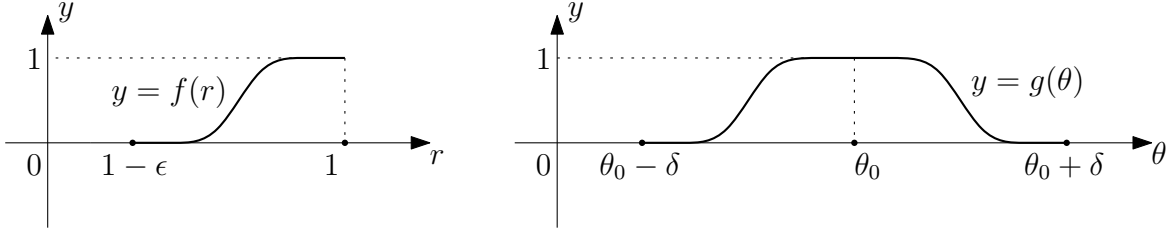}
\caption{Smooth cut-off functions.}
\label{fig:cut_off_functions}
\end{center}
\end{figure}

On the fiber $X_{r,\theta}$, consider the homotopy $F_{t,r,\theta}$, $t \in [0,1]$ of Liouville structures defined by  
$$F_{t,r,\theta}:=(\Psi^{-1}|_{re^{i\theta} \times X_0})^*(\alpha_{f(r)g(\theta)t}).$$ 
Here by abuse of notation we consider $\alpha_{f(r)g(\theta)t}$ on $re^{i\theta} \times X_0 \cong X_0$ for each $(r,\theta)$ and $t$. Note that $F_{t,1,\theta_0}=G_t$, i.e., $F_{t,r,\theta}$ restricts to $G_t$ on $X_0=X_{1,\theta_0}$. Observe that 
$F_{t,r,\theta}$ is compactly supported in $\pi^{-1}(A_{\theta_0}(\epsilon,\delta))$.
One can think of $F_{t,r,\theta}$ as the smooth extension of $G_t$ (defined only on $X_{1,\theta_0}$) to the fibers in the subset $\pi^{-1}(A_{\theta_0}(\epsilon,\delta))$, and by tapering using $f(r), g(\theta)$ we gurantee that the Liouville structures of the fibers close to the boundary $$\partial \pi^{-1}(A_{\theta_0}(\epsilon,\delta))= \pi^{-1}(\{r=1-\epsilon\}) \cup \pi^{-1}(\{\theta=\theta_0\pm\delta\})$$ stay constant during the homotopy. \\

Next, using $F_{t,r,\theta}$, we construct a smooth $1$-parameter family of compact convex Lefschetz fibrations on $W$ (such family is called compact convex Lefschetz deformation in \cite{Mc}) as follows: Consider the smooth $1$-parameter families of one- and two-forms on $\pi^{-1}(A_{\theta_0}(\epsilon,\delta))$ given by
$$\tilde{\lambda}_t:= \bigcup_{r,\theta} F_{t,r,\theta} + (\Psi^{-1})^*(f_1 dx_1 + f_2 dx_2+ dR), \quad \tilde{\omega}_t=d \tilde{\lambda}_t, \quad t \in [0,1].$$

Observe that for any fixed time $t \in [0,1]$, the restriction of the pair $(\tilde{\omega}_t,\tilde{\lambda}_t)$ to any fiber $X_{r,\theta}$ is $(dF_{t,r,\theta},F_{t,r,\theta}+(\Psi^{-1})^*(dR)|_{X_{r,\theta}})$, and this defines a convex symplectic structure on $X_{r,\theta}$ (as $R$ vanishes near $A_{\theta_0}(\epsilon,\delta) \times \partial X_0$). Since the triviality condition near its horizontal boundary is also satisfied, the fibration $$(\pi|_{\pi^{-1}(A_{\theta_0}(\epsilon,\delta))},\pi^{-1}(A_{\theta_0}(\epsilon,\delta)),\tilde{\omega}_t,\tilde{\lambda}_t)$$ is a compact convex symplectic fibration for any $t$. Moreover, since $F_{t,r,\theta}$ is compactly supported in $\pi^{-1}(A_{\theta_0}(\epsilon,\delta))$,  we can smoothly extend $\tilde{\omega}_t,\tilde{\lambda}_t$ to smooth $1$-parameter families $\omega'_t,\lambda'_t$ on $W$ by just letting $(\omega'_t,\lambda'_t)=(\omega,\lambda)$ outside $\pi^{-1}(A_{\theta_0}(\epsilon,\delta))$. Hence, we obtain a smooth $1$-parameter family $$(\pi,W,D^2,\omega'_t,\lambda'_t,J_0,j_0)$$ of compact convex Lefschetz fibrations on $W$ such that the smooth family $\lambda'_t$ of one-forms on $W$ restricts to the homotopy $\alpha_t$ of Liouville structures on $X_0$. Note that $$(\pi,W,D^2,\omega'_0,\lambda'_0,J_0,j_0)=(\pi,W,D^2,\omega,\lambda,,J_0,j_0)$$ which is compatible with the Liouville domain $(W,\Omega,\Lambda)$ by assumption.\\

Next, we construct a homotopy of Liouville domains $(W,\Omega_t,\Lambda_t)$ and a compact convex Lefschetz deformation $(\pi,W,\omega_t,\lambda_t,J_0,j_0)$ which is compatible with $(\Omega_t,\Lambda_t)$ for any $t \in [0,1]$: For any fixed $t$, by Theorem \ref{thm:Mc}, there exists a constant $K_t>0$ such that for all $k\geq K_t$ the one-form $\lambda'_t + k\pi^*(\lambda_{D^2})$ defines a Liouville structure on $W$. Let us consider
\begin{center}
$\mathcal{K}:=\{K_t \, | \, t \in [0,1] \}$, $\Lambda'_t:=\lambda'_t + K_t\pi^*(\lambda_{D^2})$, and $\Omega'_t:=d\Lambda'_t$
\end{center}
Observe that, for any $ t \in [0,1]$, $\lambda'_t$ differs from $\lambda'_0=\lambda$ only within a compact set $\pi^{-1}(A_{\theta_0}(\epsilon,\delta))$ which implies that $\Omega'_t$-dual vector field of $\Lambda'_t$ differs from $\Omega$-dual vector field of $\Lambda=\lambda + K\pi^*(\lambda_{D})$ only within the same compact set $\pi^{-1}(A_{\theta_0}(\epsilon,\delta))$. Therefore, $K_t$ is a finite number for any $t$, and hence there exists an upper bound, say $K_{upp}$, for $\mathcal{K}$.
We set $ K'=\textrm{Max} (K,K_{upp})$, and consider two smooth $1$-parameter families:   $$F^1_t:=(W,\lambda + [(1-t)K+tK']\pi^*(\lambda_{D^2})), \quad F^2_t:=(W,\lambda'_t + K'\pi^*(\lambda_{D^2}))$$
for $t \in [0,1]$. As $[(1-t)K+tK']\geq K$ (resp. $K'\geq K_t$) for each $t$, Theorem \ref{thm:Mc} implies that $F^1_t$ (resp. $F^2_t$) is a homotopy of Liouville domains. Now we define two concatenations:

$$ (W,\Lambda_t):=
\begin{cases}
F^1_{2t} & \textrm{if} \quad t \in [0,1/2] \\
F^2_{2t-1} & \textrm{if} \quad t \in [1/2,1]
\end{cases},
$$

$$ (\pi,W,D^2,\omega_t,\lambda_t,J_0,j_0):=
\begin{cases}
(\pi,W,D^2,\omega,\lambda,J_0,j_0) & \textrm{if} \quad t \in [0,1/2] \\
(\pi,W,D^2,\omega'_{2t-1},\lambda'_{2t-1},J_0,j_0) & \textrm{if} \quad t \in [1/2,1]
\end{cases}.
$$
Set $\Omega_t=d\Lambda_t$. Then by construction the Liouville domain $H_t:=(W,\Omega_t,\Lambda_t)$ is compatible with $(\pi,W,D^2,\omega_t,\lambda_t,J_0,j_0)$ for each $t \in [0,1]$. Therefore, we have constructed a compatible compact convex Lefschetz deformation $(\pi,W,\Omega_t,\Lambda_t,X,h)$ such that (i) and (iv) of the proposition hold. Finally, (iii) holds by Theorem \ref{thm:Liouville_Openbooks_Support}, and (ii) follows from Gray Stability Theorem (see, for instance, Theorem 2.2.2 of \cite{Ge}).
\end{proof}

\begin{proof}[\textbf{Proof of Theorem \ref{thm:Legendrian_Realiz_in_convex_Lef_fib}}]
Let $L \subset X_0\setminus \partial X_0$ be any given simply connected Lagrangian submanifold on the page $(X_0,d\beta,\beta)$ of the convex open book $(X,h)$ induced by a compatible compact convex Lefschetz fibration $(\pi,W,\Omega,\Lambda,X,h)$. Here we assume that $$(d\beta,\beta)=(\Omega|_{X_0},\Lambda|_{X_0}).$$
By starting with a similar argument in the proof of Theorem \ref{thm:Legendrian_Realiz_in_Contact_open_book}, we will first construct a homotopy of Liouville structures on $X_0$ as follows: Consider the cotangent bundle $T^*L$ equipped with the symplectic structure $d\lambda_{can}$ where $\lambda_{can}$ 
is the canonical $1$-form on $T^*L$. By Lagrangian neighborhood theorem, there exist a neighborhood $N(L_0)$ of the zero section $L_0$ in $T^*L$, a neighborhood $N_{X_0}(L)$ of $L$ in $X_0$, and a diffeomorphism $$\Upsilon:N_{X_0}(L)\to N(L_0)$$ which identically maps $L$ onto $L_0$ and pulls back the symplectic form $d\lambda_{can}|_{N(L_0)}$ to $d\beta|_{N_{X_0}(L)}$. From now on we will denote the restrictions with the same symbols.  Let $\alpha_{can}=\Upsilon^*(\lambda_{can})$. Then since $d\alpha_{can}=\Upsilon^*(d\lambda_{can})=d\beta$, $\beta-\alpha_{can}$ is closed. Therefore, the assumption that $L$ is simply connected implies that there exists a smooth function $g:N_{X_0}(L) \to \R$ such that $\beta-\alpha_{can}=dg$ on $N_{X_0}(L)$. Take a smooth cut-off function $\mu:X_0 \to \R$ which is equal to $1$ near $L$ and identically zero on $X_0 \setminus N_{X_0}(L)$. Then, we can set the equality $\beta-\alpha_{can}=d(\mu g)$ which holds over all $N_{X_0}(L)$. Now consider the smooth $1$-parameter family of one-forms on $X_0$ given by $$\alpha_t:=\beta-td(\mu g), \quad t \in [0,1].$$
Note that for any $t$ we have $d\alpha_t=d\beta$ is symplectic, and so $(d\alpha_t,\alpha_t)$ is an exact symplectic structure on $X_0$ for each $t \in [0,1]$. Let $\chi$ (resp. $\chi_t$) denote the $d\beta$-dual vector field of $\beta$ (resp. $\alpha_t$). That is, $\iota_{\chi}d\beta=\beta$ and $\iota_{\chi_t}d\beta=\alpha_t$. Then $\chi$ is transverse to $\partial X_0$ by assumption. Also $\alpha_t=\beta$ near $\partial W$ as the family $\{\alpha_t\}$ is compactly supported near $L$. Therefore, $\chi_t=\chi$ near $\partial W$ which implies that $\chi_t$ is transverse to $\partial W$ for each $t$. Hence, the family $\{\alpha_t\}$, indeed, defines a homotopy of Liouville domains $$G_t=(X_0,\alpha_{t}), \quad t \in [0,1].$$

Now, by Proposition \ref{prop:Extending_the_homotopy},  there exists a compatible compact convex Lefschetz deformation $(\pi,W,\Omega_t,\Lambda_t,X,h)$, $t \in [0,1]$, such that the smooth $1$-parameter family $(W,\Omega_t,\Lambda_t)$ defines a homotopy of Liouville domains with $(W,\Omega_0,\Lambda_0)=(W,\Omega,\Lambda)$. Set $(W,\Omega',\Lambda')=(W,\Omega_1,\Lambda_1)$. The parts (i), (iii), (iv) of the theorem just follow from the corresponding parts of Proposition \ref{prop:Extending_the_homotopy}, and we have (ii) by Lemma \ref{lem:Liouville_homotopy}. For the part (v) observe that we have $$\Lambda'|_{X_0}=\alpha_1=\beta-d(\mu g)=\alpha_{can}.$$ Moreover, $\alpha_{can}|_L=0$ since $\lambda_{can}|_{L_0}=0$ (the canonical one-form identically vanishes on the zero section of $T^*L$). Hence, we obtain $(\Lambda'|_{\partial W})|_{L}=0$ because $L$ is embedded on the page $X_0$ and the Reeb direction of $\Lambda'|_{\partial W}$ is transverse to $TX_0$. As a result, $L \subset X_0$ is Legendrian in $(\partial W, \textrm{Ker}(\Lambda'|_{\partial W})$. 
This completes the proof of Theorem \ref{thm:Legendrian_Realiz_in_convex_Lef_fib}
\end{proof}


\section{Convex Stabilizations} \label{sec:convex_stabilization}

We first recall positive stabilization process for contact open books (due to Giroux). Let $(M_i, \xi_i)$ be two closed contact manifolds such that each $\xi_i$ is carried by a contact open book $(X_i,h_i)$ on $M_i$. Suppose that $L_i$ is a properly embedded Lagrangian ball in $X_i$ with Legendrian boundary $\partial L_i \subset \partial X_i$. By the Weinstein neighborhood theorem each $L_i$ has a standard neighborhood $N_i$ in $X_i$ which is symplectomorphic to $(T^* D^n, d\lambda_{\textrm{can}})$ where $\lambda_{\textrm{can}}=\textbf{p}\textbf{d}\textbf{q}$ is the canonical $1$-form on $\mathbb{R}^n \times \mathbb{R}^n$ with coordinates $(\textbf{q},\textbf{p})$. Then the \emph{plumbing} or \emph{2-Murasugi sum} $(\mathcal{P}(X_1,X_2;L_1,L_2),h)$ of $(X_1,h_1)$ and $(X_2,h_2)$ \emph{along} $L_1$ and $L_2$ is the open book on the connected sum $M_1 \# M_2$ with the pages obtained by gluing $\Sigma_i$'s together along $N_i$'s by interchanging $\textbf{q}$-coordinates in $N_1$ with $\textbf{p}$-coordinates in $N_2$, and vice versa. To define $h$, extend each $h_i$ to $\tilde{h}_i$ on the new page by requiring $\tilde{h}_i$ to be identity map outside the domain of $h_i$. Then the monodrodmy $h$ is defined to be $\tilde{h}_2 \circ \tilde{h}_1$. Without abuse of notation we will drop the ``tilde'' sign, and write $h=h_2 \circ h_1$.

\begin{definition}[\cite{Gi}] \label{def:Stabilization_Contac_Open Book}
Suppose that $(X,h)$ is a contact open book carrying the contact structure $\xi=\textrm{Ker}(\alpha)$ on a $(2n+1)$-manifold $M$. Let $L$ be a properly embedded Lagrangian $n$-ball in a page $(X,d\alpha)$
such that  $\partial L \subset \partial X$ is a Legendrian
$(n-1)$-sphere in the binding $(\partial X,\alpha |_{\,\partial X})$.
Then the \emph{positive} (or \emph{standard}) \emph{stabilization} $\mathcal{S_{OB}}[(X,h);L]$ of $(\Sigma,h)$
\emph{along} $L$ is the open book $(\mathcal{P}(X,\mathcal{D}(T^*S^n);L,\textbf{D}), \delta \circ h)$ where $\textbf{D}\cong D^n$ is any fiber in the cotangent unit disk bundle $\mathcal{D}(T^*S^n)$ and $\delta:\mathcal{D}(T^*S^n) \to \mathcal{D}(T^*S^n)$ is the right-handed Dehn twist.
\end{definition}

Next, we recall positive stabilization process introduced for Lefschetz fibrations in \cite{AA}.

\begin{definition} [\cite{AA}] \label{def:Stabilization_Lefschetz_Fibration}
Let $(\pi,W,X,h)$ be a (smooth) Lefschetz fibration which induces a contact open book on $\partial W$. Suppose that $\iota:D^n \hookrightarrow (X,\omega)$ is an embeding of a Lagrangian $n$-ball $L=\iota(D^n)$ with a Legendrian boundary $\partial L=\iota(S^{n-1}) \subset \partial X$ on a page of the induced open book. Then the \emph{positive stabilization} $\mathcal{S_{LF}}[(\pi,W,X,h);L]$ of $(\pi,W,X,h)$ \emph{along} $L$ is a Lefschetz fibration $(\pi',W',X',h')$ described as follows:

\begin{itemize}
\item[(I)] $X'$ is obtained from $X$ by attaching a Weinstein $n$-handle
$H=D^n\times D^n$ along the Legendrian sphere $\partial L \subset \partial X$ via the attaching map $\psi:S^{n-1} \times D^n \to \partial X$ such that $\iota^{-1} \circ \psi|_{S^{n-1} \times \{0\}}=\textrm{id}_{S^{n-1}} \in \textrm{Diff}(S^{n-1})$.
\item[(II)] $h'=\delta_{(\phi,\phi')} \circ h$ where $\delta_{(\phi,\phi')}$ is the
right-handed Dehn twist with center $(\phi,\phi')$ (see \cite{K}) defined as follows: $\phi(S^n)$
is the Lagrangian $n$-sphere $S=D^n \times \{0\} \cup_{\partial L} L$ in the
symplectic manifold $(X'=X \cup H, \omega')$ where $\omega'$ is obtained by
gluing $\omega$ and standard symplectic form on $H$. If $\nu_1$ denote the normal bundle of $S$
in $X'$, then the normalization $\phi': TS^n \rightarrow \nu_1$ is given by the
bundle isomorphisms 
\begin{center}
$TS^n \underset{\phi_*}{\stackrel{\cong}{\longrightarrow}} TS \stackrel{\cong}{\longrightarrow} T_SX'/TS = \nu_1.$
\end{center}
\end{itemize}
\end{definition}

In \cite{AA} the authors have also defined the \emph{convex stabilizations} for compatible compact convex Lefschetz fibrations  and convex open books. These are special positive stabilizations along properly embedded Legendrian disks. They have shown that convexly stabilized compatible compact convex Lefschetz fibration (resp. convex open book) is another compatible compact convex Lefschetz fibration (resp. convex open book). In what follows, $\mathcal{S^{C}_{LF}}[(\pi,W,\Omega,\Lambda,X,h);L]$ (resp. $\mathcal{S^{C}_{OB}}[(X,h);L]$) will denote the convex stabilization of the compatible compact convex Lefschetz fibration $(\pi,W,\Omega,\Lambda,X,h)$ (resp. convex open book $(X,h)$) along a properly embedded Legendrian disk $L$. (see \cite{AA} for precise definitions of $\mathcal{S^{C}_{LF}}$ and $\mathcal{S^{C}_{OB}}$.) Let us recall $\omega$-convexity for our next result.

\begin{definition}
(i) A vector field $\chi$ on a smooth manifold $W$ is said to \emph{gradient-like} for a smooth function $\psi: W \to \R$ if $\chi \cdot \psi=\mathcal{L}_{\chi} \psi >0$ away from the critical point of $\psi$.\\ 
(ii) A real-valued function is said to be \emph{exhausting} if it is proper and bounded from below.\\
(iii) An exhausting function $\psi: W \to \R$ on a symplectic manifold $(W,\omega)$ is said to be $\omega$-\emph{convex} if there exists a complete Liouville vector field $\chi$ which is gradient-like for $\psi$.
\end{definition}

\begin{theorem} \label{thm:Convex_stab_gives_Liouville_Lefs_fib}
Let $(\pi,W,\Omega,\Lambda,X,h)$ be a compatible compact convex Lefschetz fibration and $L$ any properly embedded Legendrian disk sitting on a page of the boundary convex open book $(X,h)$. Denote $\mathcal{S^{C}_{LF}}[(\pi,W,\Omega,\Lambda,X,h);L]$ by $(\pi',W',\Omega',\Lambda',X',h')$, and $\Omega'$-dual vector field of $\Lambda'$ by $Z'$. Then there exists a homotopy from $Z'$ to another Liouville vector field $Z''$ of $\Omega'$ which is compactly supported away from $\partial W'$ such that $(W',\Omega',Z'')$ is exact symplectomorphic to a positive expansion $(W,\Omega,Z)_f$ for some nonzero smooth function $f:\partial W\to \R$.
\end{theorem}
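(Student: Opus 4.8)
The plan is to show that the convex stabilization attaches to $(W,\Omega,\Lambda)$ a cancelling pair of Weinstein handles inside $W'$, and then to cancel that pair by a Liouville homotopy supported in the interior; what remains between $\partial W$ and $\partial W'$ is a trivial Liouville cobordism, which is by definition a positive expansion.

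First I would read off the effect of $\mathcal{S^{C}_{LF}}[(\pi,W,\Omega,\Lambda,X,h);L]$ on the total space. By Definition \ref{def:Stabilization_Lefschetz_Fibration}, the fiber is enlarged to $X'=X\cup H$ by a Weinstein $n$-handle $H$ glued along the Legendrian sphere $\partial L\subset\partial X$, and the monodromy is composed with the right-handed Dehn twist along the new Lagrangian vanishing cycle $S=D^n\times\{0\}\cup_{\partial L}L\subset X'$. Passing to $W$, the standard Lefschetz handle picture $W\simeq X\cup\bigcup_i e^{n+1}_i$ shows that enlarging the fiber by $H$ contributes a Weinstein handle $H_1$ of index $n$ (the core of $H$), while the extra critical point contributes a Weinstein handle $H_2$ of index $n+1$ attached along the Legendrian lift of the thimble boundary $S$ in $\partial(W\cup H_1)$. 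Thus, up to a collar and completion, $(W',\Omega',\Lambda')$ is the Weinstein domain $W\cup H_1\cup H_2$.

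The key observation is that $(H_1,H_2)$ is a cancelling pair. Indeed, by construction the attaching sphere $S$ of $H_2$ contains the core $D^n\times\{0\}$ of $H$, so it meets the belt sphere of $H_1$ transversally in exactly one point; this is the higher-dimensional form of the Akbulut--Ozbagci stabilization picture \cite{AO}. I would then invoke the Weinstein handle-cancellation lemma of Cieliebak--Eliashberg \cite{CE}: when the Legendrian attaching sphere of the top handle meets the belt sphere of the lower handle once transversally, the Weinstein structure on $W\cup H_1\cup H_2$ is Weinstein homotopic, through a homotopy supported near $H_1\cup H_2$ and hence away from $\partial W'$, to the structure on $W$ with a trivial cobordism stacked on top. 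On the level of the dual data this is precisely a compactly supported homotopy from $Z'$ to the Liouville field $Z''$ of the cancelled structure.

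After cancellation, the region of $W'$ lying beyond $W$ carries a trivial Liouville structure, so it is a chunk of the symplectization $(\partial W\times\R,d(e^t\Lambda|_{\partial W}))$ bounded below by $\partial W\times\{0\}$ and above by the graph of a strictly positive smooth function $f:\partial W\to\R$ (the profile $f$ measuring how far the homotopy pushed the skeleton outward). By Definition \ref{def:f-positive_Weinstein_expansion} this region is exactly $W^{cob}_f$, so the cancelled domain $(W',\Omega',Z'')$ has core equal to $\textrm{Core}(W,\Omega,Z)$ and is, as a Liouville domain, the positive $f$-expansion $(W,\Omega,Z)_f$; Lemma \ref{lem:Liouville_homotopy} then upgrades this to an exact symplectomorphism. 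I expect the main obstacle to lie in the first two steps: one must verify not merely the smooth cancelling-pair picture but that the Weinstein attaching data of $H_1$ and $H_2$ coming from the convex stabilization of \cite{AA} genuinely satisfy the transversality-in-one-point hypothesis of \cite{CE}, and that the cancellation can be arranged to fix the symplectic form $\Omega'$ (changing only its primitive by closed forms) so that $Z'$ and $Z''$ are Liouville fields of the same $\Omega'$, with the leftover cobordism identified with the graph region $W^{cob}_f$ rather than a straight collar.
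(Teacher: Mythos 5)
Your proposal follows essentially the same route as the paper's proof: decompose the convex stabilization as the attachment of a cancelling pair of Weinstein handles of indices $n$ and $n+1$, cancel them via Proposition 12.22 of \cite{CE} through a homotopy supported away from $\partial W'$ (the paper implements this with an $\Omega'$-convex Morse function in the style of Lemma 3.11 of \cite{V}, which is precisely how one addresses your flagged concern that $\Omega'$ stays fixed and only the Liouville field moves), and then identify the leftover trivial cobordism with $W^{cob}_f$ by following trajectories of $Z''$ to realize the top boundary as a graph over $\partial W$. The only quibble is your closing appeal to Lemma \ref{lem:Liouville_homotopy}, which is unnecessary and would in any case only give exact symplectomorphic completions: the graph identification you already describe yields the exact symplectomorphism of the compact domains directly, exactly as in the paper's construction of $\Upsilon$.
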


\begin{proof}
Suppose $\textrm{dim}(W)=2n+2$. The convex stabilization of $(\pi,W,\omega,\chi,\psi,X,h)$ along $L$ is performed in two stages \cite{AA}: The first one is the subcritical stage where the Weinstein handle $H_n$ (of index $n$) is attached to $W$ along $\partial L \subset \partial X$, say $\tilde{W}=W \cup H_n$, and the second one is the critical stage where the Weinstein handle $H_{n+1}$ (of index $n+1$) is attached to $\tilde{W}$ (resulting $W'=\tilde{W} \cup H_{n+1}$) along certain Legendrian sphere  sitting on a page of the boundary open book obtained in the first stage. As pointed out in Theorem 6.8. of \cite{AA}, we know that $\{H_n,H_{n+1}\}$ is a symplectically canceling pair, more precisely the completion of $(W',\Omega',\Lambda')$ is symplectomorphic to the completion of $(W,\Omega,\Lambda)$. This fact follows from Proposition 12.22 in \cite{CE} or Lemma 3.6b in \cite{E2} (see also Lemma 3.9 in \cite{V}). To obtain an exact  symplectomorphism as claimed we first follow the steps given in the proof of Lemma 3.11 in \cite{V} in the Liouville category. \\

Note that there is a $\Omega$-convex Morse function $\psi:N \to \R$ defined on a colar neighborhood $N$ of $\partial W$. Such $\psi$ can be obtained by integrating $\Omega$-dual (Liouville) vector field $Z$ of $\Lambda$ on $N$. Taking $N$ small enough one can guarantee that $\psi$ has no critical point. The contact boundary $(\partial W, \textrm{Ker}(\iota_Z \Omega|_{\,\partial W}))$ is the (regular) hypersurface of $\psi$ with the highest level, say $c \in \R$. That is, we have $$N=\psi^{-1}(c-\epsilon,c] \quad \textrm{and} \quad \partial W=\psi^{-1}(c).$$ 
In the subcritical stage, we know by \cite{W} that attaching $H_n$ gives rise to a Liouville domain $(\tilde{W},\tilde{\Omega},\tilde{Z})$ such that there exists a $\tilde{\Omega}$-convex Morse function $\tilde{\psi}:N\cup H_n \to \R$ obtained by extending $\psi$ over $H_n$. Note $\tilde{\psi}$ coincides with $\psi$ on $N$ and has a single critical point of index $n$ which is located at the center of $H_n$. Let $p \in \R$ be the corresponding critical value.

\vspace{.05in}

Similarly, by attaching $H_{n+1}$ to $\tilde{W}$ in the critical stage, we get a Liouville domain $(W',\Omega',Z')$ such that there exists a $\Omega'$-convex Morse function $\psi':N\cup H_n \cup H_{n+1} \to \R$ obtained by extending $\tilde{\psi}$ over $H_{n+1}$. This time $\psi'$ coincides with $\tilde{\psi}$ on $N\cup H_n$ and has one more critical point of index $n+1$ which corresponds to the center of $H_{n+1}$. Let $q \in \R$ be the corresponding critical value. Suppose that $d \in \R$ is the absolute maximum value of $\psi'$, or equivalently, $$N\cup H_n \cup H_{n+1}=\psi'^{-1}(c-\epsilon,d] \quad \textrm{and} \quad \partial W'=\psi'^{-1}(d).$$

\vspace{.05in}

Note that $c<p<q<d$. Also there is a unique flow line, say $l$, of $Z'$ joining $p$ and $q$, and so the belt sphere of $H_n$ intersects the attaching sphere of $H_{n+1}$ transversely once. Therefore, Proposition 12.22 \cite{CE} implies that we can deform $\psi'$ to another $\Omega'$-convex Morse function $\psi''$ (with gradient-like Liouville vector field $Z''$) on $N\cup H_n \cup H_{n+1}$ such that $(\psi'',Z'')$ coincides with $(\psi',Z')$ outside a neighborhood of $l$ and $\psi''$ has no critical value greater than $c$. This means that the cobordism $\psi''^{-1}[c,d]$ together with the restricted Liouville structure is exact symplectomorphic to some compact subset of the positive part of the symplectization of $(\partial W, \textrm{Ker}(\iota_Z \Omega|_{\,\partial W}))$. In fact, by following the trajectories of $Z''$, one can realize $\psi''^{-1}(d)$ as the graph of a nonzero smooth function on $\partial W \times \{c\}$. So there exists an exact symplectomorphism $\Upsilon_0:W_f^{cob} \to \psi''^{-1}[c,d]$ for some nonzero smooth function $f:\partial W\to \R$.

Also by construction $(W,\Omega,Z)$ is an embedded Liouville subdomain of $(W',\Omega',Z'')$. As a result, we can define a diffeomorphism $\Upsilon:W \cup W_f^{cob} \longrightarrow W'=W \cup \psi''^{-1}[c,d]$ by the rule
$$ \Upsilon(x)=
\begin{cases}
\quad x & \textrm{if} \quad x \in W \\
\Upsilon_0(x) & \textrm{if} \quad x \in W_f^{cob}
\end{cases}
$$
Clearly, this is an exact symplectomorphism between the $f$-positive expansion  $(W,\Omega,Z)_f$ and $(W',\Omega',Z'')$.
\end{proof}

We close the paper by recalling some facts about convex stabilizations in the terminology of the present paper. All of the next three results have exact symplectic versions in \cite{AA}. For instance, the first one is a special case of Theorem 6.7 in \cite{AA}. Here we provide the proofs only for the last two.

\begin{theorem} \label{thm:Liouville_openbooks_Liouville_Lefs.fibs}
$\mathcal{S^{C}_{LF}}[(\pi,W,\Omega,\Lambda,X,h);L]$ induces the open book $\mathcal{S^{C}_{OB}}[(X,h);L]$. Conversely, if a convex open book $(X,h)$ is induced by $(\pi,W,\Omega,\Lambda,X,h)$, then any convex stabilization $\mathcal{S^{C}_{OB}}[(X,h);L]$ of $(X,h)$ is induced by the convex stabilization $\mathcal{S^{C}_{LF}}[(\pi,W,\Omega,\Lambda,X,h);L]$. \qed
\end{theorem}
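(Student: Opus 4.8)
The plan is to run the two stages of the convex stabilization from Definition \ref{def:Stabilization_Lefschetz_Fibration} and the proof of Theorem \ref{thm:Convex_stab_gives_Liouville_Lefs_fib} in parallel with the two ingredients (page and monodromy) of the open-book stabilization in Definition \ref{def:Stabilization_Contac_Open Book}, matching them separately. First I would identify the page. The boundary convex open book induced by $\pi'$ has as its page the new regular fiber $X'$, which by the subcritical stage is obtained from $X$ by attaching the Weinstein $n$-handle $H_n$ along the Legendrian sphere $\partial L\subset\partial X$. I would then invoke the standard fact underlying the contact open-book stabilization, namely that $\mathcal{D}(T^*S^n)$ is, as a Weinstein domain, a $0$-handle together with a single $n$-handle, so that forming the Murasugi sum $\mathcal{P}(X,\mathcal{D}(T^*S^n);L,\textbf{D})$ along the Lagrangian disk $L$ and a cotangent fiber $\textbf{D}$ has on pages the effect of attaching one Weinstein $n$-handle to $X$ along $\partial L$ (the $0$-handle of $\mathcal{D}(T^*S^n)$ being absorbed into a Weinstein neighborhood of $L$ under the base-fiber interchange that defines the plumbing). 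This produces an exact symplectomorphism $X'\cong\mathcal{P}(X,\mathcal{D}(T^*S^n);L,\textbf{D})$ identifying the two pages as Liouville domains.

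Next I would match the monodromy. By the critical stage, attaching the $(n+1)$-handle $H_{n+1}$ introduces exactly one new Lefschetz critical point whose vanishing cycle is the Lagrangian $n$-sphere $S=D^n\times\{0\}\cup_{\partial L}L\subset X'$ of Definition \ref{def:Stabilization_Lefschetz_Fibration}(II). The general theory of Lefschetz fibrations over $D^2$ then says that adjoining a critical point with vanishing cycle $S$ composes the boundary monodromy with the right-handed Dehn twist $\delta_{(\phi,\phi')}$ supported near $S$, so the new monodromy is $h'=\delta_{(\phi,\phi')}\circ h$. It remains to verify that, under the page identification of the previous step, $S$ is carried to the zero section $S^n$ of the plumbed copy of $\mathcal{D}(T^*S^n)$; granting this, $\delta_{(\phi,\phi')}$ is exactly the twist $\delta$ of Definition \ref{def:Stabilization_Contac_Open Book}, whence $h'=\delta\circ h$ agrees with the monodromy of $\mathcal{S^{C}_{OB}}[(X,h);L]$.

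Having matched both page and monodromy I would conclude the forward direction: the boundary convex open book of $\mathcal{S^{C}_{LF}}[(\pi,W,\Omega,\Lambda,X,h);L]$ is $(\mathcal{P}(X,\mathcal{D}(T^*S^n);L,\textbf{D}),\delta\circ h)=\mathcal{S^{C}_{OB}}[(X,h);L]$, the induced contact structure being the one carried by this open book by Theorem \ref{thm:Liouville_Openbooks_Support} together with the fact that the convex stabilization keeps $(\Omega',\Lambda')$ compatible with $\pi'$ by construction. For the converse I would note that both stabilizations are determined by the same single datum, the properly embedded Legendrian disk $L$ on a page: given a convex open book $(X,h)$ induced by $(\pi,W,\Omega,\Lambda,X,h)$ and its convex stabilization along $L$, performing $\mathcal{S^{C}_{LF}}$ along the same $L$ and reading the identifications of the two previous steps backwards yields a compatible compact convex Lefschetz fibration whose boundary open book is exactly $\mathcal{S^{C}_{OB}}[(X,h);L]$.

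I expect the main obstacle to be the monodromy step: verifying that the Lefschetz vanishing cycle $S$ is sent to the zero section of the plumbed $\mathcal{D}(T^*S^n)$ compatibly with the page identification, and that the two right-handed Dehn-twist conventions (the vanishing-cycle twist $\delta_{(\phi,\phi')}$ of Definition \ref{def:Stabilization_Lefschetz_Fibration} and the open-book twist $\delta$ of Definition \ref{def:Stabilization_Contac_Open Book}) agree on the nose rather than merely up to isotopy. Keeping the Weinstein structures aligned through both handle attachments, so that all of the above identifications hold as exact symplectomorphisms and not only as smooth diffeomorphisms, is the delicate bookkeeping; this is precisely where the explicit local models of \cite{AA} (Theorem 6.7) would be carried over into the Liouville terminology of the present paper.
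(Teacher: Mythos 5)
Your outline is correct and is essentially the argument the paper relies on: the paper gives no proof of this theorem at all, stating only that it is a special case of Theorem 6.7 in \cite{AA}, and that cited proof proceeds exactly as you do --- identifying the stabilized fiber $X'=X\cup H_n$ with the plumbing $\mathcal{P}(X,\mathcal{D}(T^*S^n);L,\textbf{D})$ (the complement of a neighborhood of the cotangent fiber being the Weinstein $n$-handle attached along $\partial L$), matching the vanishing cycle $S=D^n\times\{0\}\cup_{\partial L}L$ with the zero section so that $\delta_{(\phi,\phi')}$ corresponds to $\delta$, and observing that both constructions are determined by the same Legendrian disk $L$. Your flagged subtleties (twist conventions, exactness of the identifications) are genuine but harmless here, since monodromies of supported open books need only be matched up to symplectic isotopy.
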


\begin{corollary} \label{cor:convex_stab_gives_contactomorphic_mfld}
Let $(\pi',W',\Omega',\Lambda',X',h')$ be any convex stabilization of a compatible compact convex Lefschetz fibration 
$(\pi,W,\Omega,\Lambda,X,h) $. 
Let $\xi=\emph{Ker}(\Lambda|_{\partial W})$ (resp. $\xi'=\emph{Ker}(\Lambda'|_{\partial W'})$) be the induced contact structure on $\partial W'$ (resp. $\partial W'$). Then $(\partial W,\xi)$ is contactomorphic to $(\partial W',\xi')$.
\end{corollary}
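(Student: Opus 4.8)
The plan is to combine Theorem~\ref{thm:Convex_stab_gives_Liouville_Lefs_fib} with Remark~\ref{rem:no_symplecto_but_contacto} to build the chain of contactomorphisms
$$(\partial W',\xi')\ \cong\ \big(\partial W',\,\mathrm{Ker}(\iota_{Z''}\Omega'|_{\partial W'})\big)\ \cong\ \partial\big((W,\Omega,Z)_f\big)\ \cong\ (\partial W,\xi),$$
and then to justify the three links separately. The first link is essentially free: Theorem~\ref{thm:Convex_stab_gives_Liouville_Lefs_fib} produces the Liouville field $Z''$ of $\Omega'$ via a homotopy from $Z'$ that is compactly supported \emph{away} from $\partial W'$, so $Z'=Z''$ on a collar of $\partial W'$. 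Hence $\iota_{Z'}\Omega'|_{\partial W'}=\iota_{Z''}\Omega'|_{\partial W'}$, and in particular $\xi'=\mathrm{Ker}(\Lambda'|_{\partial W'})=\mathrm{Ker}(\iota_{Z''}\Omega'|_{\partial W'})$; that is, the contact boundary of $(W',\Omega',Z')$ coincides, not merely up to contactomorphism, with that of $(W',\Omega',Z'')$.

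For the middle link I would use the exact symplectomorphism $\Upsilon\colon (W,\Omega,Z)_f\to (W',\Omega',Z'')$ supplied by Theorem~\ref{thm:Convex_stab_gives_Liouville_Lefs_fib}. Being a diffeomorphism of compact Liouville domains, $\Upsilon$ carries $\partial((W,\Omega,Z)_f)$ onto $\partial W'$, and the point is to upgrade ``exact symplectomorphic'' to ``boundary contactomorphic''. I would do this by exploiting the explicit construction of $\Upsilon$ rather than the bare exactness property: $\Upsilon$ is the identity on the embedded subdomain $W$ together with the map $\Upsilon_0$ obtained by following the trajectories of $Z''$, which identifies the collar cobordism with a piece of the symplectization of $(\partial W,\mathrm{Ker}(\iota_Z\Omega|_{\partial W}))$. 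Such a flow-defined map sends the Liouville field $\partial/\partial t$ on $W^{cob}_f$ to $Z''$ and respects the symplectization coordinate, so near the boundary it is a \emph{strict} exact symplectomorphism and restricts to a contactomorphism sending $\mathrm{Ker}(\iota_{\partial/\partial t}\Omega|_{\mathrm{Graph}(f)})$ to $\mathrm{Ker}(\iota_{Z''}\Omega'|_{\partial W'})$. Conceptually one can instead pass to completions: an exact symplectomorphism of Liouville domains matches the ideal contact boundaries at infinity, and the finite contact boundary of a Liouville domain is contactomorphic to its ideal boundary through the Liouville collar, giving the same identification.

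The last link is exactly Remark~\ref{rem:no_symplecto_but_contacto} applied to the positive $f$-expansion $(W,\Omega,Z)_f$: following the trajectories of $\partial/\partial t$ for time $f$ gives a diffeomorphism $F\colon\partial W\to\mathrm{Graph}(f)=\partial((W,\Omega,Z)_f)$ with $F^*\alpha'=e^f\alpha$, where $\alpha=(\iota_Z\Omega)|_{\partial W}$, so $F$ carries $\xi$ to the induced contact structure on $\partial((W,\Omega,Z)_f)$. Composing the three contactomorphisms yields $(\partial W',\xi')\cong(\partial W,\xi)$. I expect the only genuine obstacle to be the middle link---converting the exact symplectomorphism of Theorem~\ref{thm:Convex_stab_gives_Liouville_Lefs_fib} into an honest contactomorphism of boundaries---since a general exact symplectomorphism alters the boundary contact form by a closed, not necessarily kernel-preserving, term $d(g|_{\partial})$; the resolution is precisely that the map $\Upsilon$ there is manufactured from the Liouville flow and therefore has a locally constant primitive along the boundary.
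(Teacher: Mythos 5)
Your proposal is correct and follows essentially the same route as the paper's own proof: the identical three-link chain combining Theorem~\ref{thm:Convex_stab_gives_Liouville_Lefs_fib} (both for the boundary-preserving deformation $Z'\rightsquigarrow Z''$ and for the exact symplectomorphism with the positive $f$-expansion) with Remark~\ref{rem:no_symplecto_but_contacto}. Your middle link is in fact argued more carefully than in the paper, which simply asserts the boundary contactomorphism from the exact symplectomorphism; your observation that the flow-constructed $\Upsilon$ intertwines the Liouville fields near the boundary supplies a justification the paper leaves implicit.
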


\begin{proof}
By Theorem \ref{thm:Convex_stab_gives_Liouville_Lefs_fib}, there exists another Liouville structure $(\Omega',Z'')$ on $W'$ such that $(W',\Omega',Z'')$ is exact symplectomorphic to $(W,\Omega,Z)_f$ for some $f:\partial W \to \R_{>0}$. Note that the deformation of $Z'$ to $Z''$ is performed away from the boundary $\partial W'$. Therefore, the contact boundary of $(W',\Omega',Z'')$ is still $(\partial W', \xi')$. Denote the Liouville domain $(W,\Omega,Z)_f$ by $(\tilde{W},\tilde{\Omega},\tilde{Z})$ and the induced contact structure on $\partial \tilde{W}$ by $\tilde{\xi}$. Then $(\partial \tilde{W},\tilde{\xi})$ is contactomorphic to $(\partial W', \xi')$ by Theorem \ref{thm:Convex_stab_gives_Liouville_Lefs_fib}. Now the proof follows from the fact that $(\partial W, \xi)$ and $(\partial \tilde{W},\tilde{\xi})$ are contactomorphic (This was explained in Remark \ref{rem:no_symplecto_but_contacto}).
\end{proof}

\begin{corollary} \label{cor:convex_stab_respects_contact_str}
Let $\xi$ (as in Theorem \ref{thm:Liouville_Openbooks_Support}) be the contact structure carried by a convex open book $(X,h)$. Then any convex stabilization $\mathcal{S^{C}_{OB}}[(X,h);L]$ of $(X,h)$ carries $\xi$.
\end{corollary}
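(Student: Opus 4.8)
The plan is to reduce the statement to the two previous results that have already been established, namely Theorem~\ref{thm:Liouville_openbooks_Liouville_Lefs.fibs} (which connects convex stabilization of open books to convex stabilization of Lefschetz fibrations) and Theorem~\ref{thm:Liouville_Openbooks_Support} (which says the convex open book induced by a compatible compact convex Lefschetz fibration carries the induced contact structure). The key observation is that ``carries $\xi$'' is a statement one wants to make intrinsically about an open book, but both the open book $(X,h)$ and its stabilization $\mathcal{S^{C}_{OB}}[(X,h);L]$ arise as boundary open books of compatible compact convex Lefschetz fibrations, so one can import the support property directly from the fibration side.

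First I would let $(\pi,W,\Omega,\Lambda,X,h)$ be the compatible compact convex Lefschetz fibration inducing $(X,h)$, which exists by hypothesis (the convex open book $(X,h)$ is, by definition, induced by such a fibration, and $\xi$ is the contact structure it carries by Theorem~\ref{thm:Liouville_Openbooks_Support}). Next I would form its convex stabilization $\mathcal{S^{C}_{LF}}[(\pi,W,\Omega,\Lambda,X,h);L]$, which is again a compatible compact convex Lefschetz fibration, say $(\pi',W',\Omega',\Lambda',X',h')$, on the Liouville domain $(W',\Omega',\Lambda')$; this is exactly the content recalled from \cite{AA} just before Theorem~\ref{thm:Convex_stab_gives_Liouville_Lefs_fib}. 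By the first sentence of Theorem~\ref{thm:Liouville_openbooks_Liouville_Lefs.fibs}, this stabilized fibration induces precisely the stabilized open book $\mathcal{S^{C}_{OB}}[(X,h);L]$ on $\partial W'$.

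Now the crux is simply to apply Theorem~\ref{thm:Liouville_Openbooks_Support} once more, but to the stabilized fibration: since $\mathcal{S^{C}_{LF}}[(\pi,W,\Omega,\Lambda,X,h);L]$ is a compatible compact convex Lefschetz fibration, the convex open book it induces—which we have just identified with $\mathcal{S^{C}_{OB}}[(X,h);L]$—carries the contact structure $\xi'=\mathrm{Ker}(\Lambda'|_{\partial W'})$ induced by the Liouville structure on $W'$. The remaining point is to verify that this $\xi'$ is the same contact structure as the original $\xi$ (up to contactomorphism), so that one may legitimately say the stabilized open book carries $\xi$; this is furnished by Corollary~\ref{cor:convex_stab_gives_contactomorphic_mfld}, which asserts that $(\partial W,\xi)$ and $(\partial W',\xi')$ are contactomorphic. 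Transporting the supporting contact form through this contactomorphism shows that $\mathcal{S^{C}_{OB}}[(X,h);L]$ carries $\xi$, as desired.

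I expect the only genuine subtlety to be bookkeeping about \emph{which} contact structure is being carried: Theorem~\ref{thm:Liouville_Openbooks_Support} naturally produces the contact structure $\xi'$ of the stabilized boundary, whereas the statement asks about the original $\xi$ living on $\partial W$. The hard part is therefore not any new estimate but ensuring that the identification via the contactomorphism of Corollary~\ref{cor:convex_stab_gives_contactomorphic_mfld} genuinely matches the supported structure, i.e.\ that pulling back a supporting contact form for $\xi'$ along the contactomorphism yields a supporting contact form for $\xi$ with respect to the same (pulled-back) open book decomposition. Once the compatibility conditions (i)--(iii) in the definition of ``supported by'' are checked to be preserved under the contactomorphism, the corollary follows immediately.
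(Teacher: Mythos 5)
Your proof is correct and takes essentially the same route as the paper: both arguments invoke Theorem \ref{thm:Liouville_openbooks_Liouville_Lefs.fibs} to identify $\mathcal{S^{C}_{OB}}[(X,h);L]$ as the boundary open book of $\mathcal{S^{C}_{LF}}[(\pi,W,\Omega,\Lambda,X,h);L]$, apply Theorem \ref{thm:Liouville_Openbooks_Support} to the stabilized fibration to see that this open book carries $\xi'=\mathrm{Ker}(\Lambda'|_{\partial W'})$, and then conclude via the contactomorphism of Corollary \ref{cor:convex_stab_gives_contactomorphic_mfld}. Your closing paragraph about transporting the supporting contact form through that contactomorphism merely makes explicit a bookkeeping step the paper leaves implicit in its final sentence.
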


\begin{proof} By assumption, there is a compatible compact convex Lefschetz fibration $(\pi,W,\Omega,\Lambda,X,h)$ which induces $(X,h)$, and the Liouville structure $(\Omega,\Lambda)$ induces $\xi$ on $\partial W$. Theorem \ref{thm:Liouville_openbooks_Liouville_Lefs.fibs} implies that $\mathcal{S^{C}_{OB}}[(X,h);L]$ is induced by $\mathcal{S^{C}_{LF}}[(\pi,W,\Omega,\Lambda,X,h);L]$. Moreover, by Theorem \ref{thm:Liouville_Openbooks_Support}, $\mathcal{S^{C}_{OB}}[(X,h);L]$ carries the contact structure induced by the Liouville structure on the total space of $\mathcal{S^{C}_{LF}}[(\pi,W,\Omega,\Lambda,X,h);L]$. Now the proof follows from Corollary \ref{cor:convex_stab_gives_contactomorphic_mfld}.
\end{proof}


\addcontentsline{toc}{chapter}{\textsc{References}}

\addcontentsline{TOC}{chapter}{References}

\end{document}